\date{24th September 2013}
\def\titlename{ZL-amenability and characters for the restricted direct products of finite groups}
\def\authname{M. Alaghmandan, Y. Choi, and E. Samei}
\title{{\sc \titlename}}
\author{{\sc \authname}}
\def\contact{
\noindent Mahmood Alaghmandan, Yemon Choi, and Ebrahim Samei

\noindent
Department of Mathematics and Statistics, University of Saskatchewan\\
McLean Hall, 106 Wiggins Road, Saskatoon (SK), Canada S7N 5E6.

\noindent
E-mails: \texttt{m.alaghmandan@usask.ca}, \texttt{choi@math.usask.ca}, \texttt{samei@math.usask.ca} \\
}
\newcounter{pulse}[section]
\numberwithin{pulse}{section}
\numberwithin{equation}{section}
\newcommand{\tf}{\sc}
\newtheorem{thm}[pulse]{\tf Theorem}
\newtheorem{lem}[pulse]{\tf Lemma}
\newtheorem{cor}[pulse]{\tf Corollary}
\theoremstyle{definition}
\newtheorem{dfn}[pulse]{\tf Definition}
\newtheorem{eg}[pulse]{\tf Example}
\newtheorem{rem}[pulse]{\tf Remark}
\newcommand{\para}[1]{\paragraph{#1}}
\newenvironment{numlist}{%
\begin{enumerate}

}{\end{enumerate}\ignorespacesafterend}
\newcommand{\dt}[1]{{\it#1}\/}  
\newcommand{\st}{\mathbin{\colon}}
\newcommand{\defeq}{:=}  
\newcommand{\Aff}{\operatorname{Aff}}  
\newcommand{\Ch}{\operatorname{Ch}}
\newcommand{\Irr}{\operatorname{Irr}}
\newcommand{\Sp}{\operatorname{sp}} 
\newcommand{\supp}{\operatorname{supp}}
\newcommand{\Tr}{\operatorname{Tr}} 
\renewcommand{\ker}{\operatorname{Ker}}
\newcommand{\AM}{\operatorname{AM}}
\newcommand{\Zl}{\operatorname{Z\ell}}
\newcommand{\mcr}{\operatorname{mcr}}  
\newcommand{\Proj}{{\bf P}} 
\newcommand{\Ind}{{\bf I}} 
\newcommand{\rdp}[3]{{\bigoplus_{#1\in#2} {#3}_{#1}}}
\newcommand{\der}[1]{{#1}^\prime}  
\newcommand{\affine}[2]{\left(\begin{matrix} #1 & #2 \\ 0 & 1 \end{matrix}\right)} 
\newcommand{\groupprop}{AIC}
\newcommand{\charprop}{absolutely idempotent}
\newcommand{\St}{\operatorname{St}} 
\newcommand{\abs}[1]{\vert #1 \vert}
\newcommand{\norm}[1]{\Vert #1 \Vert}
\newcommand{\pair}[2]{\langle#1,#2\rangle}
\newcommand{\ptp}{\mathbin{\widehat{\otimes}}}
\newcommand{\bigptp}{\mathop{\widehat{\bigotimes}}\nolimits}
\newcommand{\Cplx}{{\mathbb C}} 
\newcommand{\Nat}{{\mathbb N}}
\newcommand{\Cst}{\operatorname{{\rm C}^*}}
\newcommand{\SL}{\operatorname{SL}}
\newcommand{\PSL}{\operatorname{PSL}}
\newcommand{\FCbar}{\ensuremath{{\rm[FC]}^{-}}}
\newcommand{\veps}{\varepsilon}
\newcommand{\om}{\omega} 
\newcommand{\tilphi}{\widetilde{\varphi}}
\newcommand{\tilom}{\widetilde{\omega}}
\newcommand{\cA}{{\mathcal A}}
\newcommand{\cB}{{\mathcal B}}
\newcommand{\cC}{{\mathcal C}}
\newcommand{\bbF}{{\mathbb F}}
\newcommand{\bZ}{\operatorname{\bf Z}}
\newcounter{point}
\newcommand{\TOFLAG}[1][]{\refstepcounter{point}\marginpar{\fbox{\textcolor{Red}{\large\bf{\textsf C\thepoint}}}}}
\renewcommand{\dt}[1]{\textcolor{Bittersweet}{\textsf{#1}}}
\renewcommand{\para}[1]{\paragraph{#1.}}
\begin{document}

\maketitle

\begin{abstract}
Let $G$ be a restricted direct product of finite groups $\{ G_i \}_{i\in I}$, and let $\Zl^1(G)$ denote the centre of its group algebra.
We show that $\Zl^1(G)$ is amenable if and only if $G_i$ is abelian for all but finitely many $i$, and characterize the maximal ideals of $\Zl^1(G)$ which have bounded approximate identities.
We also study when an algebra character of $\Zl^1(G)$ belongs
to $c_0$ or $\ell^p$ and provide a variety of examples.

\bigskip
\noindent{\it Keywords}: Centre of group algebras, restricted direct product of finite groups, amenability, absolutely idempotent characters, maximal ideals.

\smallskip
\noindent{\it MSC 2010}: 43A20 (primary); 20C15 (secondary).
\end{abstract}

\begin{section}{Introduction}
The $L^1$-convolution algebra of a locally compact group $G$ is amenable if and only if the group $G$ is amenable.
In contrast, there is as yet no `intrinsic' characterization of those groups $G$ for which $\operatorname{ZL}^1(G)$, the centre of $L^1(G)$, is amenable. It will be convenient to call such groups \dt{ZL-amenable}.

The article \cite{AzSaSp} studied the problem of which compact groups are ZL-amenable.
One can consider the corresponding problem for (discrete) FC groups, that is, the groups in which each conjugacy class is finite.
We note that the problem is solved for \emph{finitely generated} FC groups:
for, by a result of B. H. Neumann \cite[Theorem 5.1]{Neumann_FCgroup}, each such group has a finite derived subgroup; and when a discrete group has a finite derived subgroup, then it is ZL-amenable by a result of Stegmeir \cite[Theorem 1]{Steg}.

The present paper is concerned with a particular class of infinitely generated FC groups, namely the restricted direct products of finite groups (\dt{RDPF groups}, for short) whose formal definition is given below in Definition~\ref{dfn:rdp}.
The $\Cst$ and von Neumann algebras of discrete RDPF groups were studied in some old work of Mautner \cite{Mautner_RDPF}, but to our knowledge the centres of their $\ell^1$-group algebras have not been studied explicitly.

When $G$ is such a group, we are able to study $\Zl^1(G)$ in some detail. We show in Theorem~\ref{t:amen-for-rdp} that an RDPF group is ZL-amenable if and only if its derived subgroup is finite.
 Then, in Theorem~\ref{t:every-max-ideal-has-BAI},
we obtain a characterization of those RDPF groups $G$ for which every maximal ideal in $\Zl^1(G)$ has a bounded approximate identity.
(For a commutative, unital Banach algebra, the condition that each maximal ideal has a bounded approximate identity can be thought of as a weaker version of amenability. It has been rediscovered and studied in further depth under the name of \dt{character amenability}, see e.g. \cite{KanLauPym,mm} for further details, not restricted to the unital or commutative cases.)

Using our techniques, in Section~\ref{s:c0-lp}, we revisit the main counter-example from Stegmeir's paper ({\it op cit.}\/), and are able to give simpler proofs of some of his results.
In doing so, we are led to consider when given characters on $\Zl^1(G)$ belong to $c_0$, when viewed as functions on $G$ in the natural way. We obtain a necessary and sufficient condition for this to occur, and also study the related question of when such characters lie in $\ell^p$ for various exponents $p\in [1,\infty)$.
\end{section}

\begin{section}{Notation and other preliminaries}\label{s:prelim}
In this section we set up some notation and some preliminary results that will be needed. Most of these results are well-known to specialists in non-abelian harmonic analysis but we have repeated them, often in special cases, to keep the present paper more self-contained.

\begin{subsection}{Important conventions}
Since we are interested in discrete groups, we will always equip finite groups with counting measure, not the uniform probability measure. Occasionally this will mean that in quoting results concerning harmonic analysis on compact groups, we have to insert a scaling factor. It should be clear, from context, when and how this is done.

\para{Infinite products and sums over arbitrary indexing sets}
In several places we will want to consider certain infinite sums or products where the numbers involved are indexed by some arbitrary set~$\Ind$. Since we only need to consider sums of positive numbers and products of numbers~$\geq 1$, we shall interpret this as unconditional summation
\[ \sum_{i\in\Ind} a_i \defeq \sup\left\{ \sum_{i\in F} a_i \st F\subseteq\Ind, |F|<\infty\right\} \]
when $a_i\geq 0$ for all $i\in\Ind$; and likewise we define
\[ \prod_{i\in\Ind} b_i \defeq \sup\left\{ \prod_{i\in F} b_i \st F\subseteq\Ind, |F|<\infty\right\} \]
when $b_i\geq 1$ for all $i$.

\para{Algebra characters and group characters}
The Gelfand spectrum of a commutative, unital Banach algebra $A$ will be denoted by $\Sp(A)$; usually we shall think of it as the space of non-zero multiplicative linear functionals, rather than the maximal ideal space.

Unfortunately, the word `character' is used in both Banach algebra theory and in the representation theory of finite groups, and means two slightly different things. In this article, to try and prevent ambiguity, we will always use the phrase \dt{algebra character} to mean a character in the sense of Gelfand theory, i.e.~a non-zero multiplicative linear function from a complex Banach algebra to the ground field; and we will always use the phrase \dt{group character} to mean a character in the sense of finite group theory, i.e.~the trace of a finite-dimensional (unitary) representation.

Given a group character $\chi$, we say that \dt{$\pi$ affords $\chi$} when $\pi$ is a finite-dimensional representation of $G$ whose trace is $\chi$.
The \dt{degree of $\chi$}, which we denote by $d_\chi$\/, is defined to be the dimension of any $\pi$ which affords $\chi$; equivalently, $d_\chi=\chi(e)$ where $e$ is the identity of the group in question.

\begin{rem}
When viewing a group character as the trace of a suitable representation, we use the unnormalized trace, denoted by $\Tr$, so that the trace of the $d\times d$ identity matrix is~$d$. (In some sources, one normalizes group characters so that they each take the value $1$ on the identity element of the group; our convention is more in keeping with that used in finite group theory.)
\end{rem}

\para{Notation}
Recall that if $G$ is a (discrete) group, its \dt{derived subgroup} or \dt{commutator subgroup} is the normal subgroup of $G$ generated by the set of all commutators of elements in $G$. We shall denote the derived subgroup of $G$ by~$\der{G}$.

\end{subsection}

\begin{subsection}{General properties of $\Zl^1$}
It is well known that when $G$ is finite, algebra characters on $\Zl^1(G)$ correspond to irreducible group characters of $G$. More precisely, we have the following result.

\begin{lem}\label{l:spectrum-for-finite-groups}
Let $G$ be a finite group. If $\psi$ is an algebra character on $\Zl^1(G)$, then there is a unique irreducible group character $\chi$ of $G$
which satisfies
\begin{equation}\label{eq:mlf-via-irrep}
\psi(f) = \sum_{x\in G} f(x) {d_\chi}^{-1} \chi(x^{-1})
\qquad\text{for all $f\in\Zl^1(G)$.}
\end{equation}
Conversely, for each irreducible group character $\chi$ on $G$, the formula \eqref{eq:mlf-via-irrep} defines an algebra character on $\Zl^1(G)$.
\end{lem}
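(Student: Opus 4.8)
The plan is to use the standard structure theory of the group algebra of a finite group, specialized to its centre. First I would recall the Wedderburn decomposition $\Cplx[G] \cong \bigoplus_{\chi \in \Irr(G)} M_{d_\chi}(\Cplx)$, where the isomorphism sends $f \in \ell^1(G) = \Cplx[G]$ to the tuple $(\pi_\chi(f))_\chi$, with $\pi_\chi$ a representation affording $\chi$. Taking centres on both sides, the centre $\Zl^1(G)$ maps isomorphically onto $\bigoplus_{\chi} \Cplx I_{d_\chi} \cong \Cplx^{|\Irr(G)|}$, since the centre of a full matrix algebra consists of scalar multiples of the identity. Consequently $\Zl^1(G)$ is a finite-dimensional commutative semisimple algebra, and its algebra characters are exactly the coordinate projections: for each $\chi \in \Irr(G)$, the map $f \mapsto \lambda_\chi(f)$, where $\pi_\chi(f) = \lambda_\chi(f) I_{d_\chi}$, is an algebra character, and these are pairwise distinct and exhaust $\Sp(\Zl^1(G))$.

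Next I would identify the scalar $\lambda_\chi(f)$ explicitly. Since $\pi_\chi(f) = \lambda_\chi(f) I_{d_\chi}$ for central $f$, taking (unnormalized) traces gives $\Tr \pi_\chi(f) = d_\chi \lambda_\chi(f)$, while on the other hand $\Tr \pi_\chi(f) = \sum_{x \in G} f(x) \Tr \pi_\chi(x) = \sum_{x \in G} f(x) \chi(x)$. Hence $\lambda_\chi(f) = d_\chi^{-1} \sum_{x \in G} f(x) \chi(x)$. This is almost the claimed formula \eqref{eq:mlf-via-irrep}, except with $\chi(x)$ in place of $\chi(x^{-1})$; the discrepancy is cosmetic and arises from which convention one uses for the module action / the map $\ell^1(G) \to \Cplx[G]$. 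To match the stated formula precisely, I would instead use the representation $\overline{\pi_\chi}$ (equivalently, the contragredient), whose character is $x \mapsto \chi(x^{-1})$, or simply note that $f \mapsto d_\chi^{-1}\sum_x f(x)\chi(x^{-1})$ is multiplicative on $\Zl^1(G)$ by the same argument applied to the contragredient representation. Either way, one gets a bijection between $\Irr(G)$ and $\Sp(\Zl^1(G))$ given by \eqref{eq:mlf-via-irrep}.

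For the uniqueness assertion, I would argue that if two irreducible characters $\chi, \chi'$ gave the same algebra character $\psi$, then evaluating on the indicator functions of conjugacy classes (which span $\Zl^1(G)$) forces $d_\chi^{-1}\chi(x^{-1}) = d_{\chi'}^{-1}\chi'(x^{-1})$ for all $x$; applying the first column orthogonality relation, or simply evaluating at $x = e$ and then comparing, yields $\chi = \chi'$. Conversely, checking that \eqref{eq:mlf-via-irrep} is genuinely multiplicative is exactly the statement that central characters are algebra homomorphisms, which follows from Schur's lemma as above (the product of two central elements maps under $\pi_\chi$ to the product of two scalar matrices). The only mild subtlety — and the one place I would be careful — is bookkeeping around the trace normalization and the inverse in $\chi(x^{-1})$, making sure the stated formula is the one that comes out of whichever module convention is adopted; this is the kind of thing that is "obvious" but easy to get off by a conjugation or a normalization factor. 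There is no real obstacle here: the whole statement is a direct consequence of Wedderburn's theorem plus Schur's lemma, and the proof is essentially a computation once the decomposition is in place.
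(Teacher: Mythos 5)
Your proof is correct and takes essentially the same approach as the paper's: both rest on the identification $\Zl^1(G)\cong\Cplx^{\widehat{G}}$ (the Wedderburn decomposition restricted to the centre is exactly the nonabelian Fourier transform the paper invokes), with algebra characters realized as coordinate projections. The only difference is in the bookkeeping for the explicit formula \eqref{eq:mlf-via-irrep} --- you extract it from the central character $\lambda_\chi$ via Schur's lemma and a trace (correctly flagging and resolving the $\chi(x)$ versus $\chi(x^{-1})$ convention), whereas the paper expands $f$ in the minimal idempotents $|G|^{-1}d_\chi\chi$ and applies Schur orthogonality.
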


This result is well known and has been generalized to the setting of locally compact \FCbar\ groups (in particular, it is a very special case of~\cite[Corollary 1.3]{LiMos}.)
Nevertheless, we shall give a self-contained outline of the proof of Lemma~\ref{l:spectrum-for-finite-groups}, since this is an instructive warm-up for the arguments of Section~\ref{s:BAI-in-maxideal}, and may help to make that section more accessible.

\begin{proof}[Sketch of the proof]
By standard properties of the nonabelian Fourier transform for finite groups, $\Zl^1(G)$ is isomorphic as a Banach algebra to $\Cplx^{\widehat{G}}$. Hence it is spanned by its minimal idempotents, which are all of the form $|G|^{-1}d_\chi\chi$ for some irreducible group character~$\chi$.

Suppose $\psi$ is an algebra character on $\Zl^1(G)$. If we write
$f = \sum\nolimits_\sigma a_\sigma |G|^{-1} d_\sigma \sigma$
for appropriate complex numbers $a_\sigma$, then there exists a unique $\chi$ such that $\psi(f)=a_\chi$. By Schur orthogonality, $a_\chi = \sum_{x\in G} f(x) {d_\chi}^{-1} \chi(x^{-1})$, as required.

Conversely, if $\chi$ is a given irreducible group character on $G$, then it is easily checked that
defining $\psi$ by \eqref{eq:mlf-via-irrep} gives an algebra character on~$\Zl^1(G)$.
\end{proof}

At certain points below we will rely crucially on a result of Rider concerning norms of central idempotents in the group algebra of a compact group.
(For our applications, we only need the case of finite groups; but this restriction does not seem to make the result significantly easier to prove.)

\begin{thm}[Rider; {see \cite[Lemma 5.2]{ri}}]\label{t:rider}
Let $G$ be a compact group, $\lambda$ a Haar measure on it, and $\psi$ a finite linear combination of irreducible group characters on $G$. Suppose that $\psi*\psi=\psi$ and that
\[ \int_G \abs{\psi(x)}\,d\lambda(x) > 1. \]
Then
\[ \int_G \abs{\psi(x)}\,d\lambda(x) \geq \frac{301}{300} \,. \]
\end{thm}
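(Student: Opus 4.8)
The plan is to reduce Rider's theorem to an estimate about the $L^1$-norm of central idempotents not lying too close to $1$. Since $\psi = \psi * \psi$ is a finite sum of irreducible characters and hence (after dividing by degrees) corresponds to a central idempotent in $L^1(G)$, we may write $\psi = \sum_{\chi \in S} d_\chi \chi$ for some finite set $S \subseteq \widehat{G}$. The hypothesis $\int_G |\psi| > 1$ says precisely that $S$ is not a singleton of degree-one characters; the conclusion is a gap theorem, asserting that the $L^1$-norm of such an idempotent is bounded below by $301/300$, and the point is that this bound is uniform over all compact groups and all such idempotents.

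\smallskip

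First I would recall the structure of central idempotents: by Peter--Weyl (in the finite case, by Lemma~\ref{l:spectrum-for-finite-groups} and the remarks preceding it) the minimal central idempotents are exactly the $d_\chi \chi$ with $\chi$ irreducible, and these are mutually orthogonal with $\int_G d_\chi \chi = 1$. Hence $\Norm{\psi}_1 \geq |\int_G \psi| $ is an integer-valued quantity only in trivial cases, and in general one extracts information by pairing $\psi$ against well-chosen functions. The core of Rider's argument (which is what I would reproduce or cite) is a clever averaging: one considers the behaviour of $\psi$ under translation and conjugation, uses that $\psi \geq 0$ would force $\psi$ to be a sum of coset indicator-type functions, and shows that as soon as $\psi$ is \emph{not} of the form (indicator of an open normal subgroup)/(its measure) — equivalently, as soon as $\int|\psi| > 1$ — some Fourier coefficient or some pointwise value is forced away from the degenerate configuration by a definite amount. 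Tracking the constants through this dichotomy yields the explicit $301/300$.

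\smallskip

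The key steps, in order, would be: (i) normalise so that $\psi$ is a genuine central idempotent in $L^1(G,\lambda)$ and record $\Norm{\psi}_1 = \int_G |\psi|\,d\lambda$; (ii) observe that if $\Norm{\psi}_1 = 1$ then $\psi \geq 0$ a.e.\ and $\psi$ is the normalised indicator of an open subgroup, the only idempotents of norm exactly $1$; (iii) for $\Norm{\psi}_1 > 1$, split into the case where $\psi$ takes a value of large modulus on a set of definite measure (direct estimate) versus the case where $\psi$ is spread out, handled by a second-moment / Plancherel comparison between $\Norm{\psi}_1$, $\Norm{\psi}_2^2 = \sum_{\chi\in S} d_\chi^2$, and $\Norm{\psi}_\infty$; (iv) optimise the resulting inequalities to produce a universal constant, which one checks can be taken as $301/300$. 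Steps (i)--(ii) are soft; **the hard part will be** step (iii)--(iv): squeezing out an \emph{explicit} numerical gap rather than a mere qualitative statement ``$\Norm{\psi}_1 > 1 \Rightarrow \Norm{\psi}_1 \geq 1 + c$'' requires the careful bookkeeping of Rider's original argument, and it is here that I would lean directly on \cite[Lemma 5.2]{ri} rather than re-deriving the optimal constant from scratch.
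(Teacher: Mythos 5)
Your core move --- quoting Rider's Lemma 5.2 for the explicit gap $301/300$ rather than re-deriving it --- is exactly what the paper does: the theorem is stated there as a cited result, with no proof beyond a remark that Rider's normalization $\lambda(G)=1$ can be removed by a simple rescaling (a point your write-up should also make explicit, since you pass freely between a general Haar measure and the normalized one when you write $\psi=\sum_{\chi\in S}d_\chi\chi$). So at the level of logical dependence there is no gap: both you and the paper outsource the hard quantitative step to \cite{ri}.

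However, the surrounding narrative, which you label as the ``soft'' part, contains concrete errors. First, $\int_G d_\chi\chi\,d\lambda=0$ for every nontrivial irreducible $\chi$ (orthogonality with the trivial character), not $1$. Second, and more seriously, the equality case $\int_G\abs{\psi}\,d\lambda=1$ is \emph{not} ``$S$ is a singleton of degree-one characters'', and a norm-one central idempotent need not be nonnegative, nor a normalized indicator of an open subgroup. For a single irreducible $\chi$ of degree $d$ one has $\norm{d\chi}_1\geq\norm{\chi}_2^2=1$ with equality precisely when $\abs{\chi(x)}\in\{0,d\}$ for all $x$ --- the ``absolutely idempotent'' condition of Lemma~\ref{l:1-minimal} --- and this can hold for non-linear $\chi$: the degree-two character $\chi_\sigma$ of $D_4$ in Figure~\ref{fig:chartable_D4} yields a central idempotent of $L^1$-norm exactly $1$ that takes a strictly negative value at $r^2$. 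Getting the equality case right is not cosmetic here: the entire use of Rider's theorem in Theorems~\ref{320} and~\ref{t:every-max-ideal-has-BAI} rests on the dichotomy ``either $d_\chi\norm{\chi}_1=\abs{G}$ or $d_\chi\norm{\chi}_1\geq\tfrac{301}{300}\abs{G}$'', so a sketch that misidentifies which idempotents achieve norm $1$ would propagate into the applications. Since the numerical gap is in any case delegated to \cite{ri}, the cleanest course is to state the theorem as a quoted result, as the paper does, and confine your own contribution to the rescaling remark.
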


\begin{rem}
Rider's result is stated for the case where $\lambda(G)=1$. However, a simple rescaling argument shows that this is equivalent to the formulation we have given.
We also note that $301/300$ is not sharp; it appears to be an open problem to determine the best constants here.
\end{rem}

The following lemma will be used later, in several places. We would like to thank the referee for indicating how our original proof could be made much simpler and shorter.

\begin{lem}\label{l:zl1-of-binary-product}
Let $H$ and $K$ be (discrete) FC groups. Then the canonical, isometric isomorphism of Banach algebras
\[ \theta: \ell^1(H)\ptp\ell^1(K) \to \ell^1(H\times K) \]
restricts to an isometric isomorphism of Banach algebras
\[ \Zl^1(H)\ptp\Zl^1(K) \cong \Zl^1(H\times K) \]
\end{lem}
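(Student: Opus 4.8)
The plan is to show that $\theta$ carries $\Zl^1(H) \ptp \Zl^1(K)$ onto $\Zl^1(H \times K)$, since once this is established the claimed isometric isomorphism follows by restricting the known isometric isomorphism $\theta$. First I would recall the explicit form of $\theta$: on elementary tensors $\theta(f \otimes g)(h,k) = f(h)g(k)$, and $\theta$ identifies $\ell^1(H) \ptp \ell^1(K)$ with $\ell^1(H \times K)$ isometrically. So it suffices to identify the image of the closed subspace $\Zl^1(H) \ptp \Zl^1(K)$ inside $\ell^1(H \times K)$ and check it equals $\Zl^1(H \times K)$.

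The key observation is that for an FC group $G$, an element $f \in \ell^1(G)$ lies in $\Zl^1(G)$ if and only if $f$ is constant on conjugacy classes, and since each conjugacy class of $G$ is finite, $\Zl^1(G)$ is precisely the closed linear span (in $\ell^1(G)$) of the indicator functions $\{\mathbf{1}_C : C \text{ a conjugacy class of } G\}$; moreover these indicators are `disjointly supported' in the sense that the $\ell^1$-norm of a linear combination is the sum of the absolute values of the coefficients times $|C|$. For the product $H \times K$, the conjugacy classes are exactly the sets $C \times D$ where $C$ is a conjugacy class of $H$ and $D$ is a conjugacy class of $K$, because $(h,k)$ and $(h',k')$ are conjugate in $H \times K$ iff $h \sim h'$ in $H$ and $k \sim k'$ in $K$. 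Under $\theta$, one has $\theta(\mathbf{1}_C \otimes \mathbf{1}_D) = \mathbf{1}_{C \times D}$, so $\theta$ maps the spanning set of $\Zl^1(H) \ptp \Zl^1(K)$ (namely elementary tensors of class-indicators) bijectively onto the spanning set of $\Zl^1(H \times K)$. Taking closed linear spans and using that $\theta$ is an isometric isomorphism of the ambient algebras, we get $\theta(\Zl^1(H) \ptp \Zl^1(K)) = \Zl^1(H \times K)$, and the restriction of $\theta$ is then automatically an isometric algebra isomorphism.

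The main point requiring a little care — and, I expect, the only genuine obstacle — is verifying that $\theta$ actually maps $\Zl^1(H) \ptp \Zl^1(K)$ \emph{onto} (not merely into) $\Zl^1(H \times K)$, i.e.\ that every central element of $\ell^1(H \times K)$ is a limit of sums of elementary tensors of central elements. The containment $\theta(\Zl^1(H) \ptp \Zl^1(K)) \subseteq \Zl^1(H \times K)$ is immediate since $\Zl^1(H) \otimes \Zl^1(K)$ maps into the centre and $\Zl^1(H \times K)$ is closed; for surjectivity the cleanest route is the conjugacy-class description above, together with the fact that for an FC group the class-indicators $\mathbf{1}_C$ genuinely lie in $\ell^1$ (this is where finiteness of conjugacy classes is used) and their closed span is all of $\Zl^1$. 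A brief check that $\theta$ intertwines the convolution products — which is part of the statement that $\theta$ is an algebra homomorphism and hence needs no separate argument — completes the proof.
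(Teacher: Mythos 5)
Your proposal is correct and follows essentially the same route as the paper: the inclusion $\theta(\Zl^1(H)\ptp\Zl^1(K))\subseteq\Zl^1(H\times K)$ is noted to be immediate, and the reverse inclusion is obtained by observing that conjugacy classes of $H\times K$ are products of conjugacy classes, that $\theta(1_C\otimes 1_D)=1_{C\times D}$, and that such indicator functions have dense linear span in $\Zl^1(H\times K)$ while the image of $\theta$ restricted to $\Zl^1(H)\ptp\Zl^1(K)$ is closed. The only (shared) point left implicit in both arguments is why that image is closed, which ultimately rests on the existence of norm-one averaging projections onto $\Zl^1$; this is at the same level of detail as the published proof.
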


\begin{proof}
The map $\theta$ is defined by setting $\theta(f\otimes g)(x,y) = f(x)g(y)$ for all $f\in\ell^1(H)$, $g\in \ell^1(K)$, $x\in H$ and $y\in K$. It is easy to see that $\theta(\Zl^1(H)\ptp\Zl^1(K))\subseteq \Zl^1(H\times K)$.

On the other hand, $\theta(\Zl^1(H)\ptp\Zl^1(K))$ is closed in $\ell^1(G\times H)$
and contains all indicator functions of conjugacy classes since
$1_{C_{(x,y)}}=1_{C_{x}}\times 1_{C_{y}}=\theta(1_{C_{x}}\otimes 1_{C_{y}})$. Such indicator functions have dense linear span in $\Zl^1(H\times K)$, so $\theta(\Zl^1(H)\ptp\Zl^1(K))\supseteq\Zl^1(H\times K)$.
\end{proof}

\end{subsection}

\end{section}

\begin{section}{The restricted direct product of a family of finite groups}
\label{s:rdp}

\begin{dfn}\label{dfn:rdp}
Let $\Ind$ be an indexing set and $(G_i)_{i\in\Ind}$ a family of groups; let $\prod_{i\in\Ind} G_i$ denote the set-theoretic product of these groups, which is itself a group in a canonical way. The \dt{restricted direct product} of the family $(G_i)$ is defined to be the subgroup
\[ \rdp{i}{\Ind}{ G} \defeq \left\{ (x_i)_{i\in\Ind} \in \prod_{i\in\Ind} G_i \st \text{$x_i=e_{G_i}$ for all but finitely many $i$} \right\}. \]
\end{dfn}

\begin{rem}
Our notation for the restricted direct product of groups is borrowed from the case where each of the groups $G_i$ is abelian: in this case, the restricted direct product is sometimes referred to as the \dt{direct sum} (since the restricted direct product gives the coproduct in the category of Abelian groups).
\end{rem}

The following remark assembles some basic properties of this construction. The proofs are all easy and are therefore omitted.
\begin{rem}\label{r:basic prop-RDPF}
Let $(G_i)_{i\in\Ind}$ be a family of discrete groups and let $G=\rdp{i}{\Ind}{G}$.
\begin{numlist}
\item $Z(G) = \bigoplus_{i\in\Ind} Z(G_i)$.
\item\label{li:der-of-rdpf} $\der{G} = \bigoplus_{i\in\Ind} \der{G_i}$.
\item If each $G_i$ is nilpotent of class $n$, then so is $G$.
\item If each $G_i$ is solvable of length $n$, then so is $G$.
\end{numlist}
\end{rem}

\para{Inclusion and projection homomorphisms}
Let $(G_i)$ be a family of discrete groups, and let $F\subset\Ind$; write $F^c$ for $\Ind\setminus F$. Since
\[ \rdp{i}{\Ind}{G} \cong \bigl(\rdp{i}{F}{G}\bigr) \times \bigl(\rdp{i}{F^c}{G} \bigr) \]
by Lemma~\ref{l:zl1-of-binary-product} we obtain an isometric isomorphism of Banach algebras
\begin{equation}\label{tensor:center}
 \Zl^1(\rdp{i}{\Ind}{ G}) \cong \Zl^1(\bigoplus_{i\in F} G_i)\ptp\Zl^1(\bigoplus_{i\in F^c} G_i).
\end{equation}

Hence, if we write $E_F^c$ for the identity element of $\Zl^1(\bigoplus_{i\in F^c} G_i)$, there is a unital, isometric, homomorphism of Banach algebras
\begin{equation}\label{eq:inclusion-HM}
\imath_F :
\Zl^1(\bigoplus_{i\in F} G_i) \to
 \Zl^1(\rdp{i}{\Ind}{ G})
\end{equation}
defined by $\imath_F(a) = a\otimes E_F^c$. When $F$ is a singleton, say $\{j\}$, we denote $\imath_F$ by~$\imath_j$\/.
If we denote by $\veps_{F^c}$ the augmentation character on $\Zl^1(\rdp{i}{F^c}{G})$, and denote by ${\rm id}_F$ the identity homomorphism on $\Zl^1(\rdp{i}{F}{G})$, then there is a unital, surjective homomorphism of Banach algebras
\begin{equation}\label{eq:projection-HM}
\Proj_F = {\rm id}_F \otimes \veps_{F^c} :
 \Zl^1(\rdp{i}{\Ind}{ G})
\to \Zl^1(\rdp{i}{F}{G})
\end{equation}
 which satisfies $\Proj_F(a\otimes b) = \veps_{F^c}(b) a$ for all $a\in \Zl^1(\rdp{i}{F}{G})$ and all $b\in\Zl^1(\rdp{i}{F^c}{G})$.

\bigskip
If $(G_i)_{i\in\Ind}$ is a family of \emph{finite} groups, we call $\rdp{i}{\Ind}{G}$ a restricted direct product of finite groups, or an \dt{RDPF group} for short. It is easy to show that every RDPF group is a discrete FC group.
Our philosophy in this paper is that the class of RDPF groups is very convenient from a technical point of view, and can be handled without needing much of the technical machinery for general FC or \FCbar\ groups. To illustrate this, we can characterize the RDPF groups which are ZL-amenable (the problem of characterizing all ZL-amenable, discrete FC groups remains open).

We recall that the notion of amenability for a given Banach algebra $\cA$ can be characterized in different ways, in particular through the existence of a so-called \dt{bounded approximate diagonal} for $\cA$. See, for instance, \cite[\S2.2]{RunLec} for the relevant definitions and proofs. This characterization allows us to quantify the amenability of a Banach algebra via its \emph{amenability constant}, which seems to have first been formally introduced in~\cite{am}, although the underlying ideas were tacitly recognized much earlier.

\begin{dfn}\label{d:AM(A)}
Let $\cA$ be a Banach algebra. The \dt{amenability constant of $\cA$}, which we denote by $\AM(\cA)$, is
\[
\inf \{\sup_{\alpha} \|\mu_\alpha\| \st \;(\mu_\alpha)_\alpha\;\text{ is a bounded approximate diagonal for}\; \cA\}
\]
where we define the infimum of the empty set to be $+\infty$.
\end{dfn}

Note that if $\cA$ is a unital Banach algebra, then $\AM(\cA)\geq 1$. It is a standard result in the theory of amenable Banach algebras that if $\cA$ and $\cB$ are Banach algebras, with $\cA$ amenable, and if $\phi: \cA\to \cB$ is a continuous homomorphism with dense range, then $\cB$ is amenable.
 See, for instance, \cite[Proposition 2.3.1]{RunLec}, whose proof makes it clear that
we furthermore have $\AM(\cB)\leq \norm{\phi}^2\AM(\cA)$.

As mentioned in the introduction, Stegmeier \cite{Steg} showed that if $\der{G}$ is finite then $G$ is ZL-amenable. One may ask if the converse holds, within the class of FC groups: that is, if $G$ is a discrete FC group which is ZL-amenable, is $\der{G}$ necessarily finite? The following result gives a positive answer in the special case where $G$ is an RDPF group.

\begin{thm}\label{t:amen-for-rdp}
Let $(G_i)_{i\in\Ind}$ be a family of finite groups and let $G=\rdp{i}{\Ind}{ G}$. Then the following are equivalent:
\begin{numlist}
\item\label{li:AM-i} $\Zl^1(G)$ is amenable;
\item\label{li:AM-ii} $G_i$ is abelian for all but finitely many $i$;
\item\label{li:AM-iii} $G$ is isomorphic to the product of a finite group with an abelian group;
\item\label{li:AM-iv} the derived subgroup of $G$ is finite.
\end{numlist}
\end{thm}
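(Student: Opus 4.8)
The plan is to establish the cycle of implications $\ref{li:AM-iv}\Rightarrow\ref{li:AM-iii}\Rightarrow\ref{li:AM-ii}\Rightarrow\ref{li:AM-i}\Rightarrow\ref{li:AM-iv}$, so that all four conditions are equivalent. The first three implications are essentially structural/soft, while the last one — the genuine converse to Stegmeir's theorem in this setting — is where the real work lies and where Rider's theorem (Theorem~\ref{t:rider}) enters.

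For $\ref{li:AM-iv}\Rightarrow\ref{li:AM-iii}$: by Remark~\ref{r:basic prop-RDPF}\ref{li:der-of-rdpf} we have $\der{G}=\bigoplus_{i\in\Ind}\der{G_i}$, so if this is finite then $\der{G_i}$ is trivial for all but finitely many $i$, i.e. $G_i$ is abelian outside a finite set $F\subseteq\Ind$. Then $G\cong(\bigoplus_{i\in F}G_i)\times(\bigoplus_{i\in F^c}G_i)$, where the first factor is a finite product of finite groups (hence finite) and the second is a restricted direct product of abelian groups (hence abelian). This simultaneously gives $\ref{li:AM-iii}$ and, reading off the structure, $\ref{li:AM-ii}$; conversely $\ref{li:AM-ii}\Rightarrow\ref{li:AM-iii}$ is the same splitting. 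For $\ref{li:AM-iii}\Rightarrow\ref{li:AM-i}$: if $G=K\times A$ with $K$ finite and $A$ abelian, then by Lemma~\ref{l:zl1-of-binary-product} $\Zl^1(G)\cong\Zl^1(K)\ptp\Zl^1(A)=\Zl^1(K)\ptp\ell^1(A)$. Both factors are amenable ($\Zl^1(K)$ is finite-dimensional and semisimple, hence amenable; $\ell^1(A)$ is the group algebra of an abelian, hence amenable, group), and the projective tensor product of two amenable Banach algebras is amenable, so $\Zl^1(G)$ is amenable.

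The substantial implication is $\ref{li:AM-i}\Rightarrow\ref{li:AM-iv}$, which I would prove by contraposition: assuming $\der{G}$ is infinite, I will show $\Zl^1(G)$ is not amenable by producing a quotient (or a subalgebra that is a retract) with unbounded amenability constant. Since $\der{G}$ infinite means $\der{G_i}\neq\{e\}$ for infinitely many $i$, choose an infinite sequence $i_1,i_2,\dots$ of indices with each $G_{i_n}$ nonabelian. For each such $n$, nonabelianness of $G_{i_n}$ gives an irreducible group character $\chi_n$ of degree $d_{\chi_n}\geq 2$; the associated minimal central idempotent $p_n\defeq |G_{i_n}|^{-1}d_{\chi_n}\chi_n\in\Zl^1(G_{i_n})$ is then a self-adjoint idempotent with $\|p_n\|_1=|G_{i_n}|^{-1}d_{\chi_n}\sum_{x}|\chi_n(x)|$. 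The key quantitative input: because $\chi_n$ has degree $\geq 2$ it is not a one-dimensional character, so $p_n$ is not $|G_{i_n}|^{-1}$ times a translate of a subgroup indicator, whence $\|p_n\|_1>1$; Rider's theorem (Theorem~\ref{t:rider}), applied with Haar = counting measure on the finite group $G_{i_n}$ and $\psi=p_n$, then forces $\|p_n\|_1\geq 301/300$. Now using the homomorphisms $\imath_{i_n}$ and $\Proj_F$ of \eqref{eq:inclusion-HM}--\eqref{eq:projection-HM}: for any finite $F=\{i_1,\dots,i_N\}$, the element $P_F\defeq\imath_{i_1}(p_1)\cdots\imath_{i_N}(p_N)$ corresponds under \eqref{tensor:center} to $p_1\otimes\cdots\otimes p_N\otimes E_{F^c}$, an idempotent of norm $\prod_{n=1}^N\|p_n\|_1\geq (301/300)^N$. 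Each such $P_F$ generates (since it is a central idempotent) a complemented subalgebra, and there is a norm-one projection-like map onto it; more cleanly, $\Proj_F$ composed with the $N$-fold version of the finite-group picture exhibits $\bigptp_{n=1}^N\Zl^1(G_{i_n})$ as a quotient of $\Zl^1(G)$ by a norm-one homomorphism, and a standard fact (a unital commutative Banach algebra containing a nonzero idempotent $e$ has amenability constant $\geq\|e\|$, or more simply $\geq\|e\|$ for the idempotent Banach algebra $\Cplx e\oplus(1-e)\Cplx\,\dots$) — actually the cleanest route is: the one-generated unital algebra $\overline{\mathrm{alg}}\{P_F\}$ is isomorphic to $\Cplx\oplus\Cplx$ but with the idempotent of norm $\geq(301/300)^N$, and for $\Cplx\oplus\Cplx$ realized with an idempotent of norm $t$ the amenability constant is $\geq 2t-1$ (or at least grows with $t$). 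Since the amenability constant of a quotient by a norm-$1$ homomorphism is no larger, and $N$ is arbitrary, $\AM(\Zl^1(G))=\infty$, i.e. $\Zl^1(G)$ is not amenable.

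The main obstacle is the last step: packaging Rider's lower bound $301/300$ on each factor into an unbounded lower bound on $\AM(\Zl^1(G))$. Two points need care. First, one must choose the right intermediate algebra: the most transparent is the $2$-dimensional algebra generated by a single nontrivial idempotent $P_F$ of large norm, and then invoke (or prove in a line) that $\AM$ of a $2$-dimensional commutative unital algebra with idempotent of norm $t$ is $\geq t$ (this follows by testing the defining property of an approximate diagonal against the natural bimodule, or by citing known computations of $\AM$ for $\Cplx^2$ with various norms). Second, one must confirm that the relevant map from $\Zl^1(G)$ onto this $2$-dimensional algebra — namely $a\mapsto$ the pair (coefficient of $P_F$, coefficient of $1-P_F$) obtained by composing $\Proj_F$ with the finite-group Fourier transform and then collapsing all coordinates other than the distinguished one — is an algebra homomorphism of norm $1$; this is routine from \eqref{tensor:center} and Lemma~\ref{l:spectrum-for-finite-groups} but should be stated explicitly. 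Everything else is bookkeeping with the tensor-product identifications already set up in Section~\ref{s:rdp}.
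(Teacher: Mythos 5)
Your soft implications and the overall cycle $\ref{li:AM-iv}\Rightarrow\ref{li:AM-ii}\Rightarrow\ref{li:AM-iii}\Rightarrow\ref{li:AM-i}\Rightarrow\ref{li:AM-iv}$ are fine, and $\ref{li:AM-iii}\Rightarrow\ref{li:AM-i}$ matches the paper. The problem is in the hard direction $\ref{li:AM-i}\Rightarrow\ref{li:AM-iv}$, where your key quantitative claim is false: it is \emph{not} true that the minimal central idempotent $p=|H|^{-1}d_\chi\chi$ attached to an irreducible character of degree $d_\chi\geq 2$ has $\|p\|_1>1$. By Lemma~\ref{l:1-minimal}, $\|p\|_1=|H|^{-1}d_\chi\|\chi\|_1=1$ exactly when $|\chi|$ takes only the values $0$ and $d_\chi$, and this happens for nonlinear characters: the degree-$2$ character $\chi_\sigma$ of $D_4$ in Figure~\ref{fig:chartable_D4} has $d\|\chi\|_1=2\cdot 4=8=|D_4|$, and by Theorem~\ref{t:MO_Isaacs} \emph{every} irreducible character of \emph{every} nonabelian group of nilpotency class $2$ behaves this way. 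So for $G=\bigoplus_{n\in\Nat}D_4$ your construction produces $P_F$ with $\|P_F\|_1=1$ for every $F$ and every choice of characters, and the argument yields no lower bound at all, even though $\Zl^1(G)$ is genuinely non-amenable. Rider's theorem cannot rescue a single minimal central idempotent of a single factor; the idempotent to which it must be applied is a different, more global one.

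There is a second, independent soft spot, which you flag yourself but do not close: passing from ``$\Zl^1(G)$ contains an idempotent of norm $t$'' to ``$\AM(\Zl^1(G))\geq f(t)$ with $f(t)\to\infty$''. The two-dimensional algebra $\Cplx P_F\oplus\Cplx(1-P_F)$ with its \emph{subspace} norm does have large amenability constant, but a closed subalgebra inherits no amenability bound, and the natural surjection $f\mapsto(\widehat{f}(\sigma),\veps(f))$ lands in the \emph{quotient} norm, in which the class of $P_F$ can be far smaller than $\|P_F\|_1$ (both relevant functionals have norm $1$, so nothing forces the quotient norm of the idempotent to grow). The paper sidesteps both difficulties by quoting two results of \cite{AzSaSp}: first, $\AM(\Zl^1(H))\geq 1+1/300$ for every finite nonabelian $H$ — proved there by applying Rider's theorem to the \emph{canonical diagonal} $\sum_{\chi}|H|^{-2}d_\chi^2\,\chi\otimes\overline{\chi}$, which is a central idempotent in $\ell^1(H\times H)$ whose $\ell^1$-norm \emph{equals} $\AM(\Zl^1(H))$ and is $>1$ precisely when $H$ is nonabelian; and second, the multiplicativity $\AM(\Zl^1(\rdp{i}{F}{G}))=\prod_{i\in F}\AM(\Zl^1(G_i))$. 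Combined with the norm-one surjection $\Proj_F$ and the bound $\AM(\cB)\leq\|\phi\|^2\AM(\cA)$ for dense-range homomorphisms, this gives $\AM(\Zl^1(G))\geq(301/300)^{|F|}$ directly. To repair your argument you would need to replace your $p_n$ by the diagonal idempotents of the factors (or simply cite these two facts), at which point you are running the paper's proof.
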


\begin{proof}
We start by defining $N=\{i\in\Ind \colon G_i \text{ is non-abelian}\}$.

\para{\ref{li:AM-i} $\implies$ \ref{li:AM-ii}}
As observed in \cite{AzSaSp}, it follows from Rider's result (Theorem~\ref{t:rider}) that $\AM(\Zl^1(H))\geq 1+1/300$ whenever $H$ is a finite nonabelian group.
Now suppose $\Zl^1(G)$ is amenable, and let $F$ be a finite subset of $N$.
Recall that we have a quotient homomorphism of Banach algebras
$\Proj_F: \Zl^1(G) \to \Zl^1\bigl(\rdp{i}{F}{G}\bigr)$,
as defined earlier in~\eqref{eq:projection-HM}. Therefore, by the remarks following Definition~\ref{d:AM(A)},
\[
\AM(\Zl^1(G))\geq \AM(\Zl^1(\rdp{i}{F}{G})).
\]
Moreover, it was proved in \cite{AzSaSp} that
\[ \AM(\Zl^1\bigl(\rdp{i}{F}{G}\bigr)) = \prod_{i\in F} \AM(\Zl^1(G_i)) \]
Hence
\[
\AM(\Zl^1(G))
\geq \prod_{i\in F} \AM(\Zl^1(G_i)) \geq (1+ 1/300)^{|F|} .
\]
Since $F$ was an arbitrary finite subset of $N$, this shows $N$ is finite.

\para{\ref{li:AM-ii} $\implies$ \ref{li:AM-iii}}
This is clear.

\para{\ref{li:AM-iii} $\implies$ \ref{li:AM-i}}
If $K$ is finite and $A$ is abelian, then by Lemma~\ref{l:zl1-of-binary-product},
\[ \Zl^1(K\times A) \cong \Zl^1(K)\ptp \Zl^1(A) = \Zl^1(K) \ptp\ell^1(A). \]
The right-hand side is the projective tensor product of two amenable Banach algebras, hence is amenable by standard hereditary properties of amenability. (See e.g.~\cite[\S2.3]{RunLec}.)

\para{\ref{li:AM-ii} $\iff$ \ref{li:AM-iv}}
We have $\der{G}=\bigoplus_{i\in I} \der{G_i}$ (as observed in Remark \ref{r:basic prop-RDPF}\ref{li:der-of-rdpf}).
 Therefore $\der{G}$ is finite if and only if $G_i$ is abelian for all but finitely many~$i$.
\end{proof}

\end{section}

\begin{section}{Bounded approximate identities in maximal ideals of $\Zl^1(G)$}\label{s:BAI-in-maxideal}
It is well known that if $A$ is an amenable, unital, commutative Banach algebra, then each maximal ideal of $A$ has a bounded approximate identity. (See, for instance, \cite[Lemma 2.3.6]{RunLec}.) Moreover, the proof shows that the norms of these bounded approximate identities are bounded from above by $1+\AM(A)$.

When $G$ is an FC group, $\Zl^1(G)$ has been studied by Stegmeir in~\cite{Steg}, where he gives an example in which $\Zl^1(G)$ has a maximal ideal without a bounded approximate identity, and is therefore non-amenable.
This example (see Example~\ref{eg:stegmeir} below) is in fact an RDPF group. We are therefore motivated to address a more general question: if $(G_i)_{i\in\Ind}$ is a family of finite groups, $G$ is their restricted direct product, and $\psi\in\Sp(\Zl^1(G))$, when does $\psi$ have a bounded approximate identity? This will be answered by Theorem~\ref{320} below.

First, we need an explicit description of $\Sp(\Zl^1(G))$ in terms of the individual groups~$G_i$.
This is given by the following theorem, which is essentially known
but which we state explicitly for convenience and clarity.

\begin{thm}\label{t:spectrum-of-Zl^1(G)}
Let $(G_i)_{i \in \Ind}$ be a family of finite groups and $G$ their restricted direct product.
Then there is a homeomorphism from $\Sp(\Zl^1(G))$ onto
\[
\prod_{i \in \Ind} \Sp(\Zl^1(G_i))\defeq\{  (\psi_i)_{i\in \Ind}: \psi_i \in \Sp(\Zl^1(G_i))\;\forall\ i \in \Ind\}
\]
equipped with the product topology.
In particular, $\Sp(\Zl^1(G))$ is totally disconnected.
\end{thm}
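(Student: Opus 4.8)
The plan is to describe $\Sp(\Zl^1(G))$ by approximating $G$ from below by its finite sub-restricted-products and passing to an inverse/direct limit. Concretely, for each finite $F\subseteq\Ind$ we have, via Lemma~\ref{l:zl1-of-binary-product}, the decomposition $\Zl^1(G)\cong\Zl^1(\rdp{i}{F}{G})\ptp\Zl^1(\rdp{i}{F^c}{G})$, together with the unital isometric inclusion $\imath_F$ of~\eqref{eq:inclusion-HM} and the quotient map $\Proj_F$ of~\eqref{eq:projection-HM}. The union $\bigcup_F \imath_F(\Zl^1(\rdp{i}{F}{G}))$, taken over all finite $F$, is a dense subalgebra of $\Zl^1(G)$ (it contains all indicator functions of conjugacy classes, which have dense span). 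Hence an algebra character on $\Zl^1(G)$ is determined by its restrictions to the subalgebras $\Zl^1(\rdp{i}{F}{G})$, i.e.\ by the family $(\psi\circ\imath_F)_F$, which is a compatible net of algebra characters. Using the binary product case — for finite $H,K$ one has $\Sp(\Zl^1(H\times K))\cong\Sp(\Zl^1(H))\times\Sp(\Zl^1(K))$, since $\Zl^1(H)\ptp\Zl^1(K)\cong\Zl^1(H\times K)$ and the spectrum of a projective tensor product of commutative unital Banach algebras is the product of the spectra — one gets by induction $\Sp(\Zl^1(\rdp{i}{F}{G}))\cong\prod_{i\in F}\Sp(\Zl^1(G_i))$. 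Taking the inverse limit over finite $F$ (with the obvious connecting maps, which are coordinate projections) yields $\Sp(\Zl^1(G))\cong\varprojlim_F\prod_{i\in F}\Sp(\Zl^1(G_i))=\prod_{i\in\Ind}\Sp(\Zl^1(G_i))$ as sets.

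In more detail, the bijection would go as follows. Given $\psi\in\Sp(\Zl^1(G))$, set $\psi_i\defeq\psi\circ\imath_i\in\Sp(\Zl^1(G_i))$ for each $i\in\Ind$ (note $\imath_i$ is a unital algebra homomorphism, so $\psi_i$ is a nonzero algebra character); this defines the map $\psi\mapsto(\psi_i)_{i\in\Ind}$. For injectivity: if $\psi,\psi'$ agree on every $\Zl^1(G_i)$, then because the $\imath_F$ for finite $F$ are generated multiplicatively by the $\imath_i$ — more precisely $\imath_F(a_1\otimes\cdots\otimes a_n)=\imath_{i_1}(a_1)\cdots\imath_{i_n}(a_n)$ for $F=\{i_1,\dots,i_n\}$ — the two characters agree on each $\imath_F(\Zl^1(\rdp{i}{F}{G}))$, hence on the dense subalgebra $\bigcup_F\imath_F(\Zl^1(\rdp{i}{F}{G}))$, hence everywhere by continuity. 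For surjectivity: given $(\psi_i)_{i\in\Ind}$, define on the dense subalgebra $\bigcup_F\imath_F(\Zl^1(\rdp{i}{F}{G}))$ the functional that on $\imath_F(\Zl^1(\rdp{i}{F}{G}))$ equals the character $\bigotimes_{i\in F}\psi_i$ under the identification $\Sp(\Zl^1(\rdp{i}{F}{G}))\cong\prod_{i\in F}\Sp(\Zl^1(G_i))$; the compatibility of these (as $F$ grows, one tensors with the augmentation character, which is exactly what $\Proj_F$ encodes) makes this well-defined, multiplicative, and of norm~$1$ on a dense subalgebra, hence it extends to an element of $\Sp(\Zl^1(G))$ mapping to $(\psi_i)$.

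It remains to check that this bijection is a homeomorphism for the Gelfand (weak-$*$) topology on the left and the product topology on the right. The Gelfand topology on $\Sp(\Zl^1(G))$ is the initial topology for the family of evaluation maps $\psi\mapsto\psi(f)$, $f\in\Zl^1(G)$; by density it suffices to test $f$ in $\bigcup_F\imath_F(\Zl^1(\rdp{i}{F}{G}))$, and each such $f$ lies in some $\imath_F(\Zl^1(\rdp{i}{F}{G}))$ with $F$ finite, so $\psi(f)$ depends only on finitely many coordinates $\psi_i$, $i\in F$ — hence the map into the product is continuous. Conversely each coordinate projection $(\psi_i)\mapsto\psi_j$ corresponds to $\psi\mapsto(\psi\circ\imath_j)$, and $\psi\circ\imath_j$ is determined by the values $\psi(\imath_j(f))$ for $f\in\Zl^1(G_j)$, so the inverse map is continuous too. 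Since $\Sp(\Zl^1(G))$ is compact Hausdorff (unital commutative Banach algebra) and the product $\prod_i\Sp(\Zl^1(G_i))$ is Hausdorff, a continuous bijection between them is automatically a homeomorphism; alternatively one checks continuity both ways directly as above. Finally, total disconnectedness is immediate: each $\Sp(\Zl^1(G_i))$ is finite (Lemma~\ref{l:spectrum-for-finite-groups} identifies it with $\Irr(G_i)$, a finite set) hence totally disconnected, and an arbitrary product of totally disconnected spaces is totally disconnected.

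The step I expect to be the main obstacle is verifying well-definedness and multiplicativity of the candidate character in the surjectivity argument — one must be careful that the functional defined piecewise over the directed family $\{\imath_F(\Zl^1(\rdp{i}{F}{G}))\}_F$ is genuinely consistent on overlaps, which is exactly the statement that the character $\bigotimes_{i\in F}\psi_i$ on $\Zl^1(\rdp{i}{F}{G})$ pulls back, under $\Proj_F\circ\imath_{F'}$ for $F\subseteq F'$, to $\bigotimes_{i\in F'}\psi_i$ restricted appropriately; this reduces to the observation that the augmentation character $\veps$ is the unique algebra character on $\Zl^1$ of a trivial factor and that composing with $\veps$ on the new coordinates is what the inclusion $\imath_F\hookrightarrow\imath_{F'}$ does. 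Everything else is routine, modulo citing the standard fact $\Sp(A\ptp B)\cong\Sp(A)\times\Sp(B)$ for commutative unital Banach algebras.
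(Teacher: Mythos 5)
Your proof is correct, but it is not the route the paper takes. The paper's proof (credited to the referee) invokes the Liukkonen--Mosak machinery for locally compact \FCbar\ groups: it identifies $\Sp(\Zl^1(G))$ with the set $\Ch(G)$ of extreme points of the normalized positive-definite central functions, equipped with the topology of pointwise convergence, and then the theorem reduces to the one-line observation that the restriction map $\Ch(G)\to\prod_i\Ch(G_i)$ is a continuous surjection from a compact space onto a Hausdorff one. Your argument is precisely the ``direct but lengthier proof, using only Lemma~\ref{l:spectrum-for-finite-groups} and the definition of the Gelfand topology'' that the paper explicitly mentions as an alternative but does not write out: you approximate $G$ by the finite sub-products $\rdp{i}{F}{G}$, use density of $\bigcup_F\imath_F(\Zl^1(\rdp{i}{F}{G}))$ (a directed union of subalgebras, so genuinely a dense subalgebra), the identification $\Sp(A\ptp B)\cong\Sp(A)\times\Sp(B)$ for commutative unital Banach algebras, and the unitality of the $\psi_i$ to get consistency of the piecewise-defined character in the surjectivity step. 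You correctly flag that step as the delicate one, and your resolution of it (each $\bigotimes_{i\in F}\psi_i$ has norm one, is multiplicative on the directed union, and extends by continuity) is sound. What each approach buys: the paper's is short and clean but rests on external results that are themselves not self-contained; yours is longer but elementary and keeps the paper self-contained, and it makes the bijection $\omega\mapsto(\omega\circ\imath_i)_i$ completely explicit, which is in fact the form in which the correspondence is used later (e.g.\ in Theorem~\ref{320}). The total-disconnectedness argument via finiteness of each $\Sp(\Zl^1(G_i))$ is the same in both.
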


The short proof given below is due to the referee, and makes use of the general results of Liukkonnen and Mosak \cite[\S1]{LiMos} for locally compact \FCbar\ groups. One could instead give a direct but lengthier proof, using only Lemma~\ref{l:spectrum-for-finite-groups} and the definition of the Gelfand topology. (Note that the exposition given in \cite[\S1]{LiMos} is not self-contained, and rests on previous papers of Hulanicki and Mosak.)

\begin{proof}
Let $\cC$ be the convex set consisting of all positive definite, conjugation
invariant functions $\psi$ on G with $\psi(e)=1$, and let $\Ch(G)$ denote the set
of extreme points of $\cC$. By \cite[Proposition 1.1 and Corollary 1.3]{LiMos}, $\Sp(\Zl^1(G))$ is canonically homeomorphic to $\Ch(G)$ when we equip the latter space with the topology of pointwise convergence. Therefore it suffices to
show that $\Ch(G)=\prod_{i\in I} \Ch(G_i)$.

It is clear that for each $\psi\in \Ch(G)$, $\psi|_{G_i}\in \Ch(G_i)$ and the mapping $\Psi: \Ch(G) \to \prod_{i\in I} \Ch(G_i)$, $\psi \mapsto {\psi|_{G_i}}$ is continuous. Moreover, $\Psi$ is surjective, since its image contains all finite products. Hence it is a homeomorphism since $\Ch(G)$ is compact.
\end{proof}

Let $G$ be a finite group. As observed in the proof of Lemma~\ref{l:spectrum-for-finite-groups}, the Gelfand transform maps  $\Zl^1(G)$ isomorphically onto $\Cplx^{\widehat{G}}$, and an explicit formula for the inverse of the Gelfand transform is
\begin{equation}
(a_\pi)_{\pi\in\widehat{G}} \mapsto \frac{1}{|G|} \sum_{\pi\in\widehat{G}} a_\pi d_\pi \Tr\pi
\end{equation}

Let $\psi_\sigma$ be the algebra character on $\Zl^1(G)$ that corresponds to the irreducible group representation $\sigma$. The Gelfand transform maps $\ker\psi_\sigma$ bijectively onto $\{ (a_\pi)_{\pi\in\widehat{G}} \colon a_\sigma=0\}$. Since the latter has an obvious identity element, namely $(a_\pi)_{\pi\in\widehat{G}}$ where
$a_\pi = 0$ if $\pi=\sigma$ and $1$ otherwise,
we see that $\ker\psi_\sigma$ has an identity element. Denoting this identity element by $u_\sigma$, we have
\begin{equation}\label{eq:identity-in-ideal}
 \delta_e - u_\sigma = \frac{1}{|G|}d_\sigma \Tr\sigma
\end{equation}

Now suppose $G=G_1\times \dotsb G_n$, in which case we can identify $\ell^1(G)$ (via an isometric isomorphism of Banach algebras) with the $n$-fold projective tensor product $\ell^1(G_1)\ptp \dots \ptp \ell^1(G_n)$.
Let $\sigma\in\widehat{G}$: then for $i=1,\dots, n$ there exists $\sigma_i\in \widehat{G_i}$ such that
$\sigma = \sigma_1 \times \dots \times \sigma_n$;
and since $d_\sigma = \prod_{i=1}^n d_{\sigma_i}$, Equation~\eqref{eq:identity-in-ideal} becomes
\begin{equation}\label{eq:id-for-kernel-in-product}
\delta_e - u_\sigma =
\left( \frac{1}{|G_1|} d_{\sigma_1} \Tr\sigma_1 \right)
\otimes\dots\otimes
\left( \frac{1}{|G_n|} d_{\sigma_n} \Tr\sigma_n \right)
 \in \ell^1(G_1)\ptp \dots \ptp \ell^1(G_n) .
\end{equation}


\begin{thm}\label{320}
Let $(G_i)_{i\in\Ind}$ be a family of finite groups, and let $G=\rdp{i}{\Ind}{G}$.  Let $\omega \in \Sp(\Zl^1(G))$, and let $(\chi_i)_{i\in\Ind}$ be the corresponding family of unnormalized characters.
Then $\ker\omega$ has a bounded approximate identity, if and only if $d_{\chi_i}\|\chi_i\|_1 = |G_i|$ for all but finitely many $i\in\Ind$.
\end{thm}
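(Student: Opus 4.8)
The plan is to compare $\ker\omega$ with its finite ``truncations''. For $i\in\Ind$ let $\omega_i\in\Sp(\Zl^1(G_i))$ be the algebra character corresponding to $\chi_i$ (as in Lemma~\ref{l:spectrum-for-finite-groups}) and let $u_i$ be the identity of the ideal $\ker\omega_i$ in the finite-dimensional algebra $\Zl^1(G_i)$; by~\eqref{eq:identity-in-ideal} we have $\delta_e-u_i=\frac{d_{\chi_i}}{|G_i|}\chi_i$ as a function on $G_i$, so that $c_i\defeq\norm{\delta_e-u_i}_1=\frac{d_{\chi_i}}{|G_i|}\norm{\chi_i}_1$, and the condition in the statement says exactly that $c_i=1$ for all but finitely many $i$. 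Note that $c_i\geq 1$ for every $i$, since $\delta_e-u_i$ is a non-zero idempotent in a Banach algebra. For a finite set $F\subseteq\Ind$ put $G_F=\rdp{i}{F}{G}$, let $\omega_F\in\Sp(\Zl^1(G_F))$ be the character corresponding to $(\chi_i)_{i\in F}$, and let $u_F$ be the identity of $\ker\omega_F$; combining~\eqref{eq:id-for-kernel-in-product} with the isometric identification $\Zl^1(G_F)\cong\bigptp_{i\in F}\Zl^1(G_i)$ shows $\delta_e-u_F=\bigotimes_{i\in F}(\delta_e-u_i)$, and hence $\norm{\delta_e-u_F}_1=\prod_{i\in F}c_i$.

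For the implication ``$\Leftarrow$'', suppose $c_i=1$ for all $i\notin F_0$ with $F_0$ finite, and set $C=\prod_{i\in F_0}c_i$; then $\prod_{i\in F}c_i\leq C$ for \emph{every} finite $F\subseteq\Ind$. Using the inclusion homomorphism $\imath_F$ of~\eqref{eq:inclusion-HM}, one checks that $\omega\circ\imath_F=\omega_F$, so $\imath_F(u_F)\in\ker\omega$; and since $\delta_e-\imath_F(u_F)=\imath_F(\delta_e-u_F)$ we get $\norm{\imath_F(u_F)}\leq 1+C$. I would then verify that the net $(\imath_F(u_F))_F$, indexed by the finite subsets of $\Ind$ directed by inclusion, is a bounded approximate identity for $\ker\omega$: it is bounded by $1+C$; a short tensor-product computation gives $\imath_F(u_F)\cdot\imath_{F_1}(a)=\imath_{F_1}(a)$ whenever $F\supseteq F_1$ and $a\in\ker\omega_{F_1}$; and $\bigcup_{F_1}\imath_{F_1}(\ker\omega_{F_1})$ is dense in $\ker\omega$, because every conjugacy class of $G$ is supported on finitely many coordinates (so the subalgebras $\imath_{F_1}(\Zl^1(G_{F_1}))$ have dense union in $\Zl^1(G)$), and any error in the value of $\omega$ is removed by the correction $b\mapsto b-\omega(b)\,\imath_{F_1}(\delta_e-u_{F_1})$, whose norm cost is at most $\abs{\omega(b)}\prod_{i\in F_1}c_i\leq C\,\abs{\omega(b)}$. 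Boundedness then promotes this identity-on-a-dense-set property to an approximate identity on all of $\ker\omega$.

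For the implication ``$\Rightarrow$'', suppose $\ker\omega$ has a bounded approximate identity of bound $M$. Given a finite $F\subseteq\Ind$, I would use the decomposition $\Zl^1(G)\cong\Zl^1(G_F)\ptp\Zl^1(\rdp{i}{F^c}{G})$ from~\eqref{tensor:center} to build the unital, norm-one, surjective homomorphism $\rho_F\defeq{\rm id}_F\otimes\omega_{F^c}$, where $\omega_{F^c}$ is the algebra character on $\Zl^1(\rdp{i}{F^c}{G})$ corresponding to $(\chi_i)_{i\in F^c}$, so that $\omega=\omega_F\otimes\omega_{F^c}$ under this decomposition. Since $\omega_F\circ\rho_F=\omega$ and $\rho_F(a\otimes\delta_e)=a$, we have $\rho_F(\ker\omega)=\ker\omega_F$; consequently the $\rho_F$-images of the bounded approximate identity for $\ker\omega$ form a bounded approximate identity of bound $\leq M$ for $\ker\omega_F$. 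But $\ker\omega_F$ already possesses an identity element, namely $u_F$, and in a commutative Banach algebra a bounded approximate identity for an ideal must converge in norm to the identity of that ideal; hence $\norm{u_F}_1\leq M$, and therefore $\prod_{i\in F}c_i=\norm{\delta_e-u_F}_1\leq 1+M$ for \emph{every} finite $F$. Finally, I would invoke Rider's theorem (Theorem~\ref{t:rider}) on the finite group $G_i$ (with counting measure), applied to the central idempotent $\delta_e-u_i=\frac{d_{\chi_i}}{|G_i|}\chi_i$: it shows that $c_i>1$ forces $c_i\geq 1+1/300$. If $\{i\in\Ind:c_i>1\}$ were infinite, choosing finite subsets $F$ of it with $\abs{F}$ arbitrarily large would give $\prod_{i\in F}c_i\geq(1+1/300)^{\abs{F}}$, which is unbounded, contradicting the uniform bound just obtained; so $c_i=1$ for all but finitely many $i$, as required.

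The one genuine obstacle is this last step. The uniform bound $\prod_{i\in F}c_i\leq 1+M$ does not by itself prevent infinitely many $c_i$ from lying just above $1$, since such an infinite product can converge; what rescues the argument is the quantitative gap built into Rider's theorem, namely that the central idempotents $\frac{d_{\chi_i}}{|G_i|}\chi_i$ have $L^1$-norm either exactly $1$ or at least $1+1/300$. Everything else is routine manipulation of tensor products and approximate identities, in the same spirit as the proofs of Theorems~\ref{t:amen-for-rdp} and~\ref{t:spectrum-of-Zl^1(G)}.
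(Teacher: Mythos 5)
Your proposal is correct and follows essentially the same route as the paper's proof: the same identity elements $u_F$ of $\ker\omega_F$ built from \eqref{eq:id-for-kernel-in-product}, the same inclusion maps $\imath_F$ giving the bounded approximate identity in one direction, the same contractive homomorphism ${\rm id}_F\otimes\omega_{F^c}$ (the paper's $\Lambda_F^\omega$) forcing $\norm{u_F}_1\leq M$ in the other, and the same appeal to Rider's $301/300$ gap to turn the uniform bound on $\prod_{i\in F}c_i$ into finiteness of $\{i\st c_i>1\}$. The only difference is cosmetic: you spell out the density of $\bigcup_{F_1}\imath_{F_1}(\ker\omega_{F_1})$ in $\ker\omega$ via the correction $b\mapsto b-\omega(b)\,\imath_{F_1}(\delta_e-u_{F_1})$, where the paper simply invokes a standard approximation argument.
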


\begin{proof}
We start by setting up some temporary notation. Define  for each $F\subseteq \Ind$ finite, $H_F=\bigoplus_{i\in F} G_i$ and
$H_F^c =\bigoplus_{i\in\Ind\setminus F} G_i$, so that $\Zl^1(G)=\Zl^1(H_F\times H_F^c)$.
Write $E_F^c$ for the identity element of $\Zl^1(H_F^c)$, i.e.~the unit point mass concentrated at the identity element of $H_F^c$, and likewise write $E_F$ for the identity element of $\Zl^1(H_F)$.
We also write $d_i$ for the degree of~$\chi_i$.

First suppose that $d_i \norm{\chi_i}_1=|G_i|$ for all but finitely many $i\in\Ind$.
As in Equation~\eqref{eq:id-for-kernel-in-product}, define $u_F\in \Zl^1(H_F)$ by
\[ u_F = E_F - \bigotimes_{i\in F} \frac{1}{|G_i|} d_i \chi_i\,. \]

Let $\om_F \defeq \om\imath_F$, where $\imath_F: \Zl^1(H_F)\to \Zl^1(G)$ was defined in \eqref{eq:inclusion-HM}.
Then $u_F$ is the identity element of $\ker\omega_F$. As $\ptp$ is a cross-norm,
\[ \sup_F \norm{u_F}_1 \leq 1 +  \sup_F \norm{ \bigotimes_{i\in F} \frac{1}{|G_i|} d_i \chi_i }
 = 1 + \sup_F \prod_{i\in F} \left(\frac{1}{|G_i|} d_i \norm{\chi_i}_1\right)
< \infty\,.\]
Moreover, since $\norm{ \imath_F(u_F) }_1= \norm{u_F}_1$\/, the family $(\imath_F(u_F))_{F\subset\Ind,|F|<\infty}$ is bounded. We claim that it is, when ordered by inclusion of finite subsets, a bounded approximate identity for $\ker\omega$.
To prove this, it will suffice to prove that
\begin{equation}\label{eq:it-is-BAI}
\lim_{F\subseteq\Ind, |F|<\infty} (\imath_F(u_F)) * f = f \quad\text{for all $f\in \ker\omega$;}
\end{equation}
and by a standard approximation argument, we may assume without loss of generality that $f$ has compact (i.e.~finite) support.
Thus, given $f \in Zc_c(G) \cap \ker\omega$, let $S$ be the support of $f$; if $F$ is any finite subset of $\Ind$ containing $S$, then
\[ f=\imath_S\Proj_S(f)=\imath_F\Proj_F(f) \]
(where $\Proj_S$ and $\Proj_F$ are the homomorphisms defined in~\eqref{eq:projection-HM}).
Moreover, $0=\omega(f)=\omega_F(\Proj_F(f))$.
Consequently $\Proj_F(f)\in\ker\omega_F$, and thus
\[ f*\imath_F(u_F) = \imath_F(\Proj_F(f)*u_F) = \imath_F(\Proj_F(f))= f \]
for all finite $F$ containing $S$. This proves Equation~\eqref{eq:it-is-BAI}, as required.

Conversely, suppose that $\ker\omega$ has a bounded approximate identity say $(h_\alpha)_\alpha$. For each $F\subseteq \Ind$ when $|F|<\infty$ define
\[
 \Lambda_F^\omega(f\otimes g)=\omega_{F^c}(g)f
\quad\text{for all $f\in \Zl^1(H_F)$ and $g\in \Zl^1(H_F^c)$.} \]
Since $\omega_{F^c}=\omega\imath_{F^c}$ has norm $1$, being an algebra character, we have
\[
\|\Lambda_F^\omega(f\otimes g)\|_1\leq \norm{f}_1 \norm{g}_1,
\]
and so $\Lambda^\omega_F$ defines a linear contraction from $\Zl^1(H_F\times H_F^c)$ onto $\Zl^1(H_F)$, using Lemma~\ref{l:zl1-of-binary-product}.

Moreover, given $f_1,f_2\in \Zl^1(H_F)$ and $g_1,g_2\in \Zl^1(H_F^c)$, it is easily checked that
\[
   \Lambda_F^\omega\big((f_1\otimes g_1)*(f_2\otimes g_2)\big)
 = \Lambda_F^\omega(f_1\otimes g_1) * \Lambda_F^\omega (f_2\otimes g_2);
\]
hence, by linearity and continuity, $\Lambda_F^\omega$ is an algebra homomorphism.

Observe, since $\om_F\Lambda_F^\omega = \omega$,
 that $\Lambda_F^\omega(\ker\omega)\subseteq \ker\omega_F$.
Moreover, for each $f\in \ker\omega_F$, $\imath_F(f)\in\ker \omega$, and $\Lambda_F^\omega\imath_F(f)=f$. Since  $u_F\in\ker\omega_F$ and $(h_\alpha)$ is a bounded approximate identity for $\ker\omega_F$\/,
\[ \begin{aligned}
\|\Lambda_F^\omega(h_\alpha) - u_F\|_1
 & = \|\Lambda_F^\omega(h_\alpha)*u_F - u_F\|_1  \\
 & = \|\Lambda_F^\omega(h_\alpha)*\Lambda_F^\omega\imath_F(u_F)-\Lambda_F^\omega\imath_F(u_F)\|_1 \\
 & = \|\Lambda_F^\omega \left(h_\alpha* \imath_F(u_F)-\imath_F(u_F)\right)\|_1
 & \rightarrow 0\;.
\end{aligned} \]
Because $\|\Lambda_F^\omega\|\leq 1$,
\[
\sup_{F\subseteq\Ind,|F|<\infty} \sup_\alpha\|\Lambda_F^\omega(h_\alpha)\|_1
 \leq \sup_\alpha\|h_\alpha\|_1 \leq M
\]
 for some $M>0$, thus $\|u_F\|_1\leq M$ for all finite subsets $F\subseteq\Ind$. Hence
\begin{equation}\label{eq:beethoven}
\prod_{i\in F} \frac{d_i}{|G_i|} \| \chi_i\|_1 = \norm{ E_F- u_F }_1 \leq M+1
\end{equation}

Let $i\in\Ind$. For each $i$, $|G_i|^{-1} d_i  \chi_i$ is a central idempotent in the group algebra $\ell^1(G_i)$; in particular it has $\ell^1$-norm $\geq 1$. Moreover, by Rider's theorem (Theorem \ref{t:rider}),
\[ \text{either}\quad
 |G_i|^{-1}d_i \norm{\chi_i}_1 = 1
\qquad\text{or}\quad
 |G_i|^{-1}d_i \norm{\chi_i}_1 \geq \frac{301}{300}\,.
\]
It therefore follows from \eqref{eq:beethoven} that
\[ | \{ i \in\Ind \st  d_i \norm{\chi_i}_1 > \abs{G_i}\} | \leq \frac{\log (M+1 )}{\log 301 - \log 300} < \infty \,.\]
In particular, $d_i \norm{\chi_i}_1 =|G_i|$ for all but finitely many~$i$.
\end{proof}


\begin{thm}\label{t:every-max-ideal-has-BAI}
Let $(G_i)_{i\in\Ind}$ be a family of finite groups and let $G=\rdp{i}{\Ind}{G}$. Then the following are equivalent:
\begin{numlist}
\item\label{li:one} every maximal ideal in $\Zl^1(G)$ has a bounded approximate identity;
\item\label{li:three} there is a finite subset $F\subset\Ind$ such that, for each $i\in\Ind\setminus F$ and each irreducible group character $\chi$ of $G_i$, we have $d_\chi \norm{\chi}_1 = |G_i|$.
\item\label{li:four} there exists a constant $M>0$ such that each maximal ideal in $\Zl^1(G)$ has a bounded approximate identity of norm $\leq M$.
\end{numlist}
\end{thm}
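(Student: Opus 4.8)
The plan is to prove the cycle of implications \ref{li:three} $\implies$ \ref{li:four} $\implies$ \ref{li:one} $\implies$ \ref{li:three}, leaning heavily on Theorem~\ref{320} and on bookkeeping the norm bounds produced in its proof. First I would note that \ref{li:four} $\implies$ \ref{li:one} is trivial. For \ref{li:one} $\implies$ \ref{li:three}, suppose no such finite $F$ exists; then the set $\Ind_0 = \{ i \in \Ind : \exists\ \chi \in \Irr(G_i) \text{ with } d_\chi \norm{\chi}_1 > |G_i|\}$ is infinite. For each $i \in \Ind_0$ pick such a bad character $\chi_i$, and for $i \notin \Ind_0$ pick $\chi_i$ to be the trivial character; assembling these via the identification in Theorem~\ref{t:spectrum-of-Zl^1(G)} gives an $\omega \in \Sp(\Zl^1(G))$ whose associated family $(\chi_i)$ fails the condition $d_{\chi_i}\norm{\chi_i}_1 = |G_i|$ for infinitely many $i$. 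By Theorem~\ref{320}, $\ker\omega$ has no bounded approximate identity, contradicting \ref{li:one}.

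The substantive implication is \ref{li:three} $\implies$ \ref{li:four}: we must produce a \emph{uniform} bound $M$ working for all maximal ideals simultaneously. Given $\omega \in \Sp(\Zl^1(G))$ with family $(\chi_i)$, the hypothesis \ref{li:three} says that for every $i \in \Ind \setminus F$ we have $d_{\chi_i}\norm{\chi_i}_1 = |G_i|$, and this holds \emph{regardless of which} $\chi_i$ was chosen, since $F$ is independent of $\omega$. Now I would re-run the ``sufficiency'' half of the proof of Theorem~\ref{320}, but splitting off the fixed finite block $H_F = \bigoplus_{i\in F} G_i$: for finite $E \supseteq F$, set $u_E = E_E - \bigotimes_{i\in E}\tfrac{1}{|G_i|}d_{\chi_i}\chi_i \in \Zl^1(H_E)$, which is the identity of $\ker\omega_E$, and observe
\[
\norm{u_E}_1 \leq 1 + \prod_{i\in E}\Bigl(\tfrac{1}{|G_i|}d_{\chi_i}\norm{\chi_i}_1\Bigr) = 1 + \prod_{i\in F}\Bigl(\tfrac{1}{|G_i|}d_{\chi_i}\norm{\chi_i}_1\Bigr) \leq 1 + \prod_{i\in F}\max_{\sigma\in\Irr(G_i)}\Bigl(\tfrac{1}{|G_i|}d_\sigma\norm{\sigma}_1\Bigr) =: M,
\]
where in the middle equality the factors for $i \in E\setminus F$ are $1$ by \ref{li:three}. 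The key point is that the final quantity $M$ depends only on $F$ and the groups $G_i$ (it is a finite product of finite maxima), not on $\omega$. Then $(\imath_E(u_E))_{E\supseteq F}$, ordered by inclusion, is a bounded approximate identity for $\ker\omega$ of norm $\leq M$ by exactly the argument in Theorem~\ref{320} (for $f \in \Zl^1_c(G)\cap\ker\omega$ with support $S$, take $E \supseteq S \cup F$ and use $f = \imath_E\Proj_E(f)$ together with $\Proj_E(f) \in \ker\omega_E$, so $f * \imath_E(u_E) = f$). This yields \ref{li:four}.

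I expect the main obstacle to be purely expository rather than mathematical: making precise that the constant $M$ in \ref{li:three} $\implies$ \ref{li:four} can be chosen uniformly, by emphasizing that $F$ is fixed before $\omega$ is chosen and that the ``tail'' factors all equal $1$. A secondary point requiring a word of care is the direction of the cofinal net in the approximate identity argument — one must order finite subsets containing $F$ (not all finite subsets) by inclusion, and check this is still a directed set cofinal in the sense needed to absorb an arbitrary finitely-supported $f \in \ker\omega$; this is immediate since any finite set $S$ can be enlarged to contain $F$. No new ideas beyond Theorem~\ref{320} and Theorem~\ref{t:spectrum-of-Zl^1(G)} are needed.
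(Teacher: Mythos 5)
Your proposal is correct and follows essentially the same route as the paper: the same cycle \ref{li:three}$\implies$\ref{li:four}$\implies$\ref{li:one}$\implies$\ref{li:three}, with the uniform constant $M$ defined as a finite product of maxima over the exceptional set $F$ and the bounded approximate identity built exactly as in the sufficiency half of Theorem~\ref{320}, and the contrapositive of \ref{li:one}$\implies$\ref{li:three} obtained by assembling a ``bad'' character family via Theorem~\ref{t:spectrum-of-Zl^1(G)} and invoking Theorem~\ref{320}. Your extra care about restricting the net to finite sets containing $F$ is harmless but not needed, since every factor $\tfrac{d_\sigma}{|G_i|}\norm{\sigma}_1$ is at least $1$.
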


\begin{proof}\

\para{\ref{li:four}$\implies$\ref{li:one}}
This is trivial.

\para{\ref{li:three}$\implies$\ref{li:four}}
Let $F$ be as assumed in \ref{li:three}, and define
\[
M\defeq \prod_{i \in F} \sup_{\pi\in \widehat{G_i}} \frac{d_\pi}{|G_i|} \norm{\Tr\pi}_1 < \infty.
\]
Given $\omega\in\Sp(\Zl^1(G))$, let $(\chi_i)$ be the corresponding family of (irreducible) group characters, and let $d_i$ denote the degree of $\chi_i$. For each finite subset $T\subset\Ind$, define $u_T\in\Zl^1(\rdp{i}{T}{G})= \bigptp_{i\in T} \Zl^1(G_i)$ by
\[ u_T = \delta_e - \bigotimes_{i\in T}  \frac{d_i}{|G_i|} \chi_i \]
Order the net $(\imath_T(u_T))$, where $T$ ranges over all finite subsets of $\Ind$, by inclusion. Then by an argument like that in the proof of Theorem~\ref{320}, $(\imath_T(u_T))$ is a bounded approximate identity for $\ker\omega$, with $\sup_T \norm{\imath_T(u_T)} \leq M+1$.

\para{\ref{li:one}$\implies$\ref{li:three}}
We prove the contrapositive. Suppose that \ref{li:three} does not hold. Then there exists an infinite set $S\subset \Ind$, and for each $j\in S$ an irreducible group character $\phi_j$ on $G_j$, such that $d_j\norm{\phi_j}_1\neq |G_j|$.
Since $|G_j|^{-1} d_j\phi_j$ is an idempotent in $\Zl^1(G_j)$, the result of Rider implies that $|G_j| d_j\norm{\phi_j}_1 \geq 301/300$.
Now let $\omega$ in $\Sp(\Zl^1(G))$ be such that the corresponding family $(\chi_i)$ of group characters satisfies $\chi_j= \phi_j$ for all $j\in\Ind$.
Then as in the proof of Theorem~\ref{320}, we can show that $\ker(\omega)$ does not have a bounded approximate identity.
\end{proof}

We have already observed that, for any irreducible group character $\chi$ on a finite group $G$, we have the lower bound $d_\chi \norm{\chi}_1 \geq |G|$. Let us examine this inequality more closely.

\begin{lem}\label{l:1-minimal}
Let $G$ be a finite group and $\chi$ an irreducible character on $G$. Then $d_\chi \norm{\chi}_1 \geq |G|$. Moreover, equality holds if and only if
\begin{equation}\label{eq:ramified}
\abs{\chi(x)} \in \{0, d_\chi\} \quad\text{for all $x\in G$.}
\end{equation}
\end{lem}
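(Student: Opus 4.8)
The plan is to prove the inequality via Cauchy--Schwarz (or equivalently Jensen), using the known fact that $\sum_{x\in G}\abs{\chi(x)}^2 = \abs{G}$ for an irreducible character $\chi$, and then to analyze the equality case directly. First I would write
\[
\abs{G} = \sum_{x\in G}\abs{\chi(x)}^2 = \sum_{x\in G} \abs{\chi(x)}\cdot\abs{\chi(x)} \leq d_\chi \sum_{x\in G}\abs{\chi(x)} = d_\chi\norm{\chi}_1,
\]
where the middle inequality uses the elementary bound $\abs{\chi(x)}\leq \chi(e) = d_\chi$ valid for every $x\in G$ (each $\chi(x)$ is a sum of $d_\chi$ roots of unity, being the trace of a unitary matrix of finite order). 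This gives the desired lower bound with essentially no work, and it makes the equality condition transparent.

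For the equality case: equality in the step $\sum_x \abs{\chi(x)}^2 \leq d_\chi\sum_x\abs{\chi(x)}$ holds if and only if, for every $x\in G$, we have $\abs{\chi(x)}^2 = d_\chi\abs{\chi(x)}$, i.e. $\abs{\chi(x)}\bigl(\abs{\chi(x)} - d_\chi\bigr) = 0$, which is exactly the condition $\abs{\chi(x)}\in\{0,d_\chi\}$ of \eqref{eq:ramified}. So the equivalence is immediate once the inequality is set up this way; I do not anticipate any real obstacle here. The one point requiring a word of justification is the identity $\sum_{x\in G}\abs{\chi(x)}^2 = \abs{G}$, which is the first orthogonality relation (Schur orthogonality) for the irreducible character $\chi$, valid because we are using the unnormalized trace and counting measure on $G$ --- consistent with the conventions fixed in Section~\ref{s:prelim}; I would cite this as standard.

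The main (very minor) obstacle is bookkeeping with normalizations: one must make sure the counting-measure convention is used consistently so that $\norm{\chi}_1 = \sum_{x\in G}\abs{\chi(x)}$ and $\sum_{x\in G}\abs{\chi(x)}^2 = \abs{G}$ rather than $1$. Given the conventions already declared in the paper this is automatic, so the proof should be only a few lines. I might also add a remark interpreting \eqref{eq:ramified}: it says $\chi$ vanishes off the preimage in $G$ of the centre of $G/\ker\chi$, equivalently that the projective representation afforded by $\chi$ on $G/\ker\chi$ has the property that its character has constant modulus $d_\chi$ on a subgroup and vanishes elsewhere --- but this is optional colour and not needed for the statement as written.
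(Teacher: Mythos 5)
Your proposal is correct and follows essentially the same route as the paper's proof: Schur orthogonality gives $\sum_{x\in G}\abs{\chi(x)}^2=\abs{G}$, the pointwise bound $\abs{\chi(x)}\leq d_\chi$ yields the inequality, and the equality case is read off termwise. No gaps.
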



\begin{proof}
Since $\chi$ is irreducible, $\sum_{x\in G} \abs{\chi(x)}^2 = |G|$. Hence the first statement in the lemma follows from the trivial inequality
\begin{equation}\label{eq:easy}
  \sum_{x\in G} \abs{\chi(x)}^2 \leq d_\chi \sum_{x\in G} \abs{\chi(x)}\,.
\tag{$*$}
\end{equation}
For the second statement, we need to show that equality holds in \eqref{eq:easy} if and only if \eqref{eq:ramified} is satisfied. The `if' direction is clear. Conversely, if \eqref{eq:ramified} is not satisfied, pick $y\in G$ such that $0< \abs{\chi(y)} < d_\chi$. Then $\abs{\chi(y)}^2 < d_\chi \abs{\chi(y)}$, so that
\[\sum_{x\in G} \abs{\chi(x)}^2 = \abs{\chi(y)}^2 + \sum_{x\in G\setminus\{y\}} \abs{\chi(x)}^2 < d_\chi\abs{\chi(y)} +  d_\chi \sum_{x\in G\setminus\{y\}} \abs{\chi(x)} =
d_\chi \sum_{x\in G} \abs{\chi(x)}\]
as required.
\end{proof}

Condition~\eqref{eq:ramified} is very restrictive, and is related to some concepts from the character theory of finite groups which we now briefly describe.

\begin{dfn}[The centre of a group character]\label{d:Z-of-character}
Let $\chi$ be an irreducible group character on a finite group $G$, of degree $d$ say. Let $\pi$ be any representation which affords $\chi$, then
\[ \{ x\in G \st \abs{\chi(x)} = d \} = \{ x\in G \st \pi(x)\in\Cplx I_d\} \supseteq Z(G) \]
Thus the set $\{x\in G \st \abs{\chi(x)} = d_\chi\}$ is a normal subgroup of $G$, usually denoted by $\bZ(\chi)$ and called the \dt{centre of $\chi$}.
\end{dfn}

Condition \eqref{eq:ramified} therefore says that $\chi$ is induced from a linear character on its centre. Following \cite{Kan_MPC97}, we say that $\chi$ is an  \dt{absolutely idempotent character}, and that a group $G$ is \dt{\groupprop} if each irreducible character of $G$ is \charprop.
It follows easily from the definition that quotients of \groupprop\  groups and products of \groupprop\  groups are also \groupprop\ .

Any discussion of hereditary properties implicitly presupposes that we can find some non-abelian examples, for otherwise the discussion would be rather vacuous. The smallest such example is the dihedral group of order~$8$, whose character table is shown in Figure~\ref{fig:chartable_D4}.
More examples are provided by the following result.

\begin{figure}[hpt]
\begin{center}
\begin{tabular}{|l|c|c|c|c|c|}
  \hline
    &  $\{1\}$ & $\{s,r^2s\}$ & $\{rs,r^3s\}$ & $\{r,r^3\}$ & $\{r^2\}$\\ \hline
  $\chi_0$	& $1$ & \hfil$1$ & \hfil$1$ & \hfil$1$ & \hfil$1$\\
  $\chi_1$	& $1$ & $-1$	 &  $-1$    & \hfil$1$ & \hfil$1$ \\
  $\chi_2$	& $1$ & \hfil$1$ &  $-1$    &   $-1$   & \hfil$1$\\
  $\chi_3$	& $1$ & $-1$	 & \hfil$1$ &   $-1$   & \hfil$1$ \\
  $\chi_\sigma$ & $2$ & \hfil$0$ & \hfil$0$ & \hfil$0$ &  $-2$\\
  \hline
\end{tabular}
\end{center}
\caption{Character table of $D_4=\langle r,s \mid sr = r^3s\rangle$}
\label{fig:chartable_D4}
\end{figure}

\begin{thm}\label{t:MO_Isaacs}
Let $G$ be a finite group which is nilpotent of class $2$. Then $G$ is \groupprop.
\end{thm}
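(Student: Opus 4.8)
The plan is to show that every irreducible character $\chi$ of a class-$2$ nilpotent finite group $G$ satisfies condition~\eqref{eq:ramified}, i.e.~$\abs{\chi(x)}\in\{0,d_\chi\}$ for all $x\in G$; equivalently, by Definition~\ref{d:Z-of-character}, the character vanishes off its centre $\bZ(\chi)$. Fix $\chi\in\Irr(G)$ and a representation $\pi$ affording it; we may assume $\pi$ is faithful on the quotient $G/\ker\chi$, so after passing to that quotient (which is still nilpotent of class $\leq 2$, and where the centre of the character becomes the centre of the group) we reduce to the case where $\chi$ is faithful. The key structural input is that for a class-$2$ group, $\der{G}\subseteq Z(G)$, so for any $x\in G$ the map $y\mapsto [x,y]$ is a homomorphism $G\to Z(G)$ whose kernel is the centralizer $C_G(x)$; thus $\abs{G:C_G(x)}$ equals the size of the conjugacy class of $x$, and the conjugates of $x$ are exactly the coset $x\der{G}\cap(\text{something})$ — more precisely $x$ is conjugate to $xz$ for a subgroup of $z$'s in $\der{G}$.

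The main step I would carry out is the following dichotomy for $x\in G$ with $\chi$ faithful: either $x\in Z(G)$, in which case $\pi(x)$ is a scalar (by Schur's lemma) and $\abs{\chi(x)}=d_\chi$; or $x\notin Z(G)$, in which case $\chi(x)=0$. For the second case, since $x\notin Z(G)=\bZ(\chi)$ (using faithfulness to identify these), there exists $y$ with $[x,y]\neq e$. Now $[x,y]\in\der{G}\subseteq Z(G)$, so $\pi([x,y])=\zeta I_{d_\chi}$ for some root of unity $\zeta\neq 1$. From $\pi(y)\pi(x)\pi(y)^{-1}=\pi(yxy^{-1})=\pi([y,x]x)=\bar\zeta\,\pi(x)$ we get that $\pi(x)$ is conjugate to $\bar\zeta\,\pi(x)$, hence $\chi(x)=\tr\pi(x)=\tr(\bar\zeta\,\pi(x))=\bar\zeta\,\chi(x)$, forcing $\chi(x)=0$. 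This is the standard argument that an irreducible character vanishes outside $\bZ(\chi)$, but I would want to present it cleanly with the class-$2$ hypothesis making $[x,y]$ automatically central.

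Assembling: for general $\chi$ (not necessarily faithful), run the above on $\overline{G}=G/\ker\chi$ and the induced faithful irreducible character $\overline\chi$; then $\abs{\chi(x)}=\abs{\overline\chi(\overline x)}\in\{0,d_{\overline\chi}\}=\{0,d_\chi\}$ for all $x\in G$. By Lemma~\ref{l:1-minimal}, $\chi$ is absolutely idempotent; since $\chi$ was arbitrary, $G$ is \groupprop. The one point needing a little care is that $G/\ker\chi$ is again nilpotent of class $\leq 2$ (quotients of class-$2$ groups are class $\leq 2$, and the class-$1$, i.e.~abelian, case is trivial since then $d_\chi=1$ and~\eqref{eq:ramified} holds automatically). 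I do not anticipate a serious obstacle here; the only mild subtlety is keeping straight the identification of $\bZ(\chi)$ with the preimage of $Z(G/\ker\chi)$, rather than with $Z(G)$ itself, in the non-faithful case — but this is exactly why one passes to the quotient before invoking centrality of commutators.
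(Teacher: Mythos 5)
Your proof is correct and follows essentially the same route as the paper: the paper notes that $Z(G)\subseteq \bZ(\chi)$ together with $G/Z(G)$ abelian forces $G/\bZ(\chi)$ to be abelian, and then cites the proof of \cite[Theorem 2.31]{Isaacs_CTbook} for the conclusion that $\chi$ vanishes off $\bZ(\chi)$. Your reduction to the faithful case and the scalar-conjugation step (centrality of $[x,y]$ gives $\pi(yxy^{-1})=\bar\zeta\,\pi(x)$ with $\zeta\neq 1$, hence $\chi(x)=0$) is precisely the content of that cited argument, so you have simply made the paper's appeal to the literature self-contained.
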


This result was originally communicated to the second author by I. M. Isaacs, and for sake of completeness we include a paraphrased version of his argument. (However, see Remark~\ref{r:kaniuthed} below.)

\begin{proof}[{Proof (Isaacs, \cite{MO_Isaacs})}]
Let $\chi$ be an irreducible group character on $G$. It suffices to prove that $\chi$ is \charprop.
Since $G/Z(G)$ is abelian and $Z(G) \subseteq \bZ(\chi)$, $G/\bZ(\chi)$ is abelian. Therefore, by the proof of \cite[Theorem 2.31]{Isaacs_CTbook}, $\chi$ vanishes on $G\setminus \bZ(\chi)$, and hence is \charprop\ as required.
\end{proof}

\begin{rem}\label{r:kaniuthed}
There exist infinite, discrete, nilpotent \groupprop\ groups. For example, it is observed
in the discussion following \cite[Lemma 1.2]{Kan_MPC97} that all divisible or $2$-step discrete nilpotent groups are \groupprop. For further details on the relevance of the AIC condition to harmonic analysis on discrete nilpotent groups, see~\cite{Kan_JFA06} and the references therein.
\end{rem}

The authors are unaware of any intrinsic characterization of non-abelian, finite \groupprop\ groups. We can at least rule out many groups using the following result, shown to us by F.~Ladisch.

\begin{thm}[{Ladisch, \cite{MO_Ladisch}}]\label{t:MO_Ladisch}
\label{t:ladisch}
Every finite \groupprop\ group is nilpotent.
\end{thm}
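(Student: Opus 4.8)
The plan is to prove the contrapositive in a structural way: if $G$ is a finite group that is not nilpotent, then $G$ has some irreducible character $\chi$ which is not \charprop, i.e. for which there exists $x\in G$ with $0<\abs{\chi(x)}<d_\chi$. Since $G$ is not nilpotent, it has a non-normal Sylow $p$-subgroup for some prime $p$; equivalently, there is a prime $p$ such that $\mathbf{O}^{p}(G)$ or the relevant normal structure fails, and in particular $G$ is not the direct product of its Sylow subgroups. I would first reduce to a minimal counterexample: take $G$ non-nilpotent of least order among non-\groupprop{} groups... wait, that is automatic, so instead take $G$ a non-nilpotent group of least order — every proper subgroup and every proper quotient of such a $G$ is nilpotent, so $G$ is what is classically called a \emph{minimal non-nilpotent} (Schmidt) group. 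The structure of these is completely known (Schmidt--Iwasawa): $G = P\rtimes Q$ with $P$ a normal Sylow $p$-subgroup, $Q$ a cyclic Sylow $q$-subgroup, $P/\Phi(P)$ a chief factor on which $Q$ acts irreducibly and nontrivially, and $Z(G)=\Phi(P)\times\Phi(Q)$ with $G/Z(G)$ of order $pq\cdot(\text{power of }p)$ roughly; in the cleanest case $P$ is elementary abelian or of exponent $p$ (exponent $4$ if $p=2$).

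The key step is then: produce in such a Schmidt group an irreducible character that takes a nonzero value of modulus strictly less than its degree. The natural candidate is a character lying over a nontrivial linear character $\lambda$ of $P$ (or of $P/\Phi(P)$): since $Q$ permutes the nontrivial linear characters of the elementary abelian section in a single orbit of size $q$ (by irreducibility of the $Q$-action), Clifford theory gives an irreducible $\chi$ of $G$ of degree $q$ (or a divisor thereof times the relevant factor), induced from the stabilizer. Evaluating $\chi$ on a suitable element $x$ of $P$ that is moved by $Q$: by the induced-character formula $\chi(x) = \sum$ over a transversal of $\lambda(g x g^{-1})$ restricted to the fixed coset, and these $q$ terms are distinct nontrivial $p$-th roots of unity (the $Q$-orbit of $\lambda$ evaluated at $x$), whose sum has modulus strictly between $0$ and $q=d_\chi$ — indeed $\sum_{\zeta^{p}=1,\zeta\neq\text{fixed}} = $ a sum of roots of unity that cannot have modulus $q$ unless all terms are equal, which would contradict the transitivity/nontriviality of the action. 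Concretely, for $x$ generating a $Q$-invariant-but-not-pointwise-fixed cyclic piece one gets $\chi(x)$ equal to a sum like $\sum_{i=0}^{q-1}\omega^{a^i}$ with $\omega$ a primitive $p$-th root of unity and $a$ of multiplicative order $q$ mod $p$; such a Gauss-type sum is never $0$ when $q\nmid$ something and certainly never has modulus $q$. This contradicts Lemma~\ref{l:1-minimal}'s characterization, so $G$ is not \groupprop.

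I expect the main obstacle to be handling the $p=2$, $\exp(P)=4$ case of Schmidt groups cleanly, and more generally making the "the induced character value has modulus strictly less than its degree" step uniform without case analysis — one wants a single clean argument that a sum of $q$ distinct roots of unity indexed by a nontrivial transitive orbit never achieves modulus $q$, together with a guarantee that the value is nonzero for a well-chosen $x$ (nonvanishing is the more delicate half and may require choosing $x$ in $Z(P)\setminus\Phi(P)$ or adjusting $\chi$). An alternative, possibly slicker route that avoids Schmidt's classification: argue directly that if every irreducible character of $G$ vanishes off its centre (condition~\eqref{eq:ramified}), then by a theorem on groups all of whose irreducibles vanish off the centre — these are exactly the groups of "central type"-like behavior — $G/Z(G)$ has a very rigid structure forcing nilpotency; in fact a group in which every irreducible $\chi$ satisfies $\chi$ vanishes on $G\setminus\bZ(\chi)$ has the property that $G'\leq Z(G)$ is false in general but one can run an induction on $\abs{G}$ using that this property passes to quotients (clear) and, with more care, to a Sylow normalizer, eventually invoking that a group whose every irreducible character vanishes off the centre of that character is a direct product of a $p$-group with an abelian group. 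I would develop whichever of these two the referee/Ladisch's original note follows; the contrapositive-via-Schmidt-groups argument is the one I would write up first, flagging the $p=2$ subtlety as the point needing the most care.
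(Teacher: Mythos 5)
There is a genuine gap at the very first step of your main route: the reduction to a minimal non-nilpotent (Schmidt) group does not go through. To run that reduction you need a minimal-order counterexample --- a non-nilpotent \groupprop\ group $G$ of least order --- to have all proper subgroups nilpotent. Minimality only controls \emph{counterexamples}, i.e.\ groups that are simultaneously non-nilpotent and \groupprop; and the \groupprop\ property is only known to pass to quotients and direct products, not to subgroups (restriction of an irreducible character to a subgroup need not stay irreducible, so condition~\eqref{eq:ramified} for $G$ says nothing directly about the irreducible characters of $H\leq G$). Hence a proper subgroup of your minimal $G$ could perfectly well be non-nilpotent without contradicting minimality, and $G$ need not be a Schmidt group; the Schmidt--Iwasawa structure theory, and with it the whole induced-character computation, never becomes available. (Your computation for the Frobenius case $\bbF_p\rtimes C_q$ is essentially fine --- a sum of $q$ distinct $p$-th roots of unity is nonzero at some $x$ by column orthogonality and never has modulus $q$ --- but it is applied to a group you have not shown is relevant.) Your second, alternative route points in the right direction (induct on $\abs{G}$ using quotient-closure), but as written it leans on the assertion that such a group is ``a direct product of a $p$-group with an abelian group,'' which is both unproved and false: by Theorem~\ref{t:MO_Isaacs} any class-$2$ nilpotent group is \groupprop, e.g.\ the product of $D_4$ with a nonabelian group of order $27$ and exponent $3$, which is not of that form.

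For comparison, the paper's argument avoids subgroups entirely and needs no classification. One first proves a support lemma: for any normal $N\neq 1$ and any $\chi\in\Irr(G)$, the set $N\cap\supp(\chi)$ contains a non-identity element (Clifford's theorem if $\langle\chi|_N,\veps_N\rangle\neq 0$, column-sum orthogonality otherwise). Iterating this over all $\chi\in\Irr(G)$, and using that $\supp(\chi)=\bZ(\chi)$ for \charprop\ characters, one gets that $Z(G)=\bigcap_{\chi\in\Irr(G)}\bZ(\chi)$ is nontrivial for every nontrivial \groupprop\ group. Then $G/Z(G)$ is \groupprop\ (quotient-closure) and smaller, hence nilpotent by induction, and $G$ is nilpotent as a central extension of a nilpotent group. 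If you want to salvage your write-up, this ``show the centre is nontrivial, then pass to the quotient'' scheme is the step to aim for; the Schmidt-group detour should be dropped.
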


For sake of interest and completeness, we have included Ladisch's proof in the appendix.
We finish this section by combining the previous theorems into the following result.

\begin{thm}
Let $(G_i)$ be a family of finite groups and let $G=\rdp{i}{\Ind}{G}$.
\begin{itemize}
\item[{\rm(a)}] If every maximal ideal of
$\Zl^1(G)$ has a bounded approximate identity, then all but finitely many $G_i$ must be nilpotent.
\item[{\rm(b)}] If all but finitely many $G_i$ are $2$-step nilpotent, then there is a constant $M>0$ such that every maximal ideal of $\Zl^1(G)$ has a bounded approximate identity of norm $\leq M$.
\end{itemize}
\end{thm}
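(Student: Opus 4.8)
The plan is to derive both parts as formal consequences of the results already established, threading them together correctly; no genuinely new argument is required. The key bridge, which I would record first, is that the numerical identity $d_\chi\norm{\chi}_1 = |G_i|$ appearing in condition~\ref{li:three} of Theorem~\ref{t:every-max-ideal-has-BAI} is, by Lemma~\ref{l:1-minimal}, precisely the statement that $\chi$ is \charprop. Hence condition~\ref{li:three} of that theorem says exactly that all but finitely many of the $G_i$ are \groupprop\ groups, and it is this reformulation that makes both Isaacs' theorem and Ladisch's theorem applicable.

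For part~(a): if every maximal ideal of $\Zl^1(G)$ has a bounded approximate identity, then Theorem~\ref{t:every-max-ideal-has-BAI} (the implication \ref{li:one}~$\Rightarrow$~\ref{li:three}) furnishes a finite set $F\subset\Ind$ such that $G_i$ is \groupprop\ for every $i\in\Ind\setminus F$. Ladisch's theorem (Theorem~\ref{t:MO_Ladisch}) then shows that each such $G_i$ is nilpotent, which is the conclusion of~(a).

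For part~(b): let $F\subset\Ind$ be the finite set of indices for which $G_i$ fails to be $2$-step nilpotent. I would first verify that $G_i$ is \groupprop\ for each $i\in\Ind\setminus F$: if $G_i$ is abelian this is immediate, since every irreducible character then has degree~$1$ and trivially satisfies~\eqref{eq:ramified}; and if $G_i$ has nilpotency class exactly~$2$ it follows from Theorem~\ref{t:MO_Isaacs}. By Lemma~\ref{l:1-minimal} this gives $d_\chi\norm{\chi}_1 = |G_i|$ for every irreducible character $\chi$ of $G_i$ and every $i\in\Ind\setminus F$, which is exactly condition~\ref{li:three} of Theorem~\ref{t:every-max-ideal-has-BAI}; the implication \ref{li:three}~$\Rightarrow$~\ref{li:four} of that theorem then produces a single constant $M$ valid for all maximal ideals simultaneously.

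Since every step is a direct appeal to an already-proved result, there is no real obstacle to overcome; the only point calling for a moment's care is the degenerate subcase in~(b) in which some $G_i$ with $i\notin F$ happens to be abelian, so that the literal hypothesis ``class~$2$'' of Theorem~\ref{t:MO_Isaacs} does not apply --- but this is settled by the trivial remark above (and in any case Isaacs' argument goes through whenever $G/Z(G)$ is abelian). It is perhaps worth flagging in the write-up that part~(a) is not reversible, and that the gap between the hypotheses of~(a) and~(b) --- namely, what happens when the $G_i$ are nilpotent of class $\geq 3$ --- is left open by this theorem.
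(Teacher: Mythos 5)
Your proposal is correct and is exactly the argument the paper intends: the paper's proof is the one-line instruction to combine Theorem~\ref{t:every-max-ideal-has-BAI}, Lemma~\ref{l:1-minimal}, Theorem~\ref{t:MO_Isaacs} and Theorem~\ref{t:MO_Ladisch}, and you have simply spelled out how those pieces fit together, including the correct observation that the norm identity $d_\chi\norm{\chi}_1=|G_i|$ is equivalent to $\chi$ being \charprop. The care you take over the abelian subcase in~(b) is a sensible (if minor) point of rigour that the paper leaves implicit.
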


\begin{proof}
Combine Theorem \ref{t:every-max-ideal-has-BAI}, Lemma \ref{l:1-minimal}, Theorem~\ref{t:MO_Isaacs} and Theorem~\ref{t:MO_Ladisch}.
\end{proof}

\end{section}

\begin{section}{$c_0$ and $\ell^p$ estimates for elements of $\Sp(\Zl^1(G))$}\label{s:c0-lp}

\begin{subsection}{Preliminary discussions}
We start in the more general setting of a discrete FC group $G$. 
Given an algebra character $\varphi$ on $\Zl^1(G)$, there is an obvious way to define a corresponding function $\tilphi:G\to\Cplx$ which is constant on conjugacy classes (cf.~the proof of Theorem~\ref{t:spectrum-of-Zl^1(G)}).
Abusing terminology, we will therefore say that $\varphi$ \dt{belongs to~$c_0$}, or \dt{to~$\ell^p$} for some $1\leq p<\infty$, when the corresponding function $\tilphi$ is in $c_0(G)$ or $\ell^p(G)$ respectively.
We can now state the following result of Stegmeir.

\begin{lem}[Stegmeir, {\cite[Lemma 3]{Steg}}]
\label{l:stegmeir}
Let $G$ be a discrete FC group and let $\om\in \Sp(\Zl^1(G))$. Suppose that $\ker\om$ has a bounded approximate identity and $\om\in c_0$. Then $\om\in\ell^2$.
\end{lem}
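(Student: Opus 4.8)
The plan is to exploit the fact that an algebra character $\omega$ on $\Zl^1(G)$ is a positive-definite, conjugation-invariant function on $G$ with $\omega(e)=1$ (as used in the proof of Theorem~\ref{t:spectrum-of-Zl^1(G)}), and to combine positive-definiteness with the existence of a bounded approximate identity in $\ker\omega$. First I would recall that, since $\ker\omega$ has a bounded approximate identity, say bounded by $M$, there is an element $u\in\Zl^1(G)$ (obtained as a weak* limit of the approximate identity, viewing $\ell^\infty(G)$ as a dual space) with $\norm{u}_\infty\leq M$ and $\delta_e-u$ behaving like an identity for the ideal. Equivalently, $E\defeq\delta_e-u$ satisfies $E*f=f$ for all $f\in\ker\omega$ and $\omega(E)=1$; thinking of $E$ as a function on $G$, we have $E$ conjugation-invariant and, crucially, $E*\widetilde\omega=\widetilde\omega$ is false in general, so instead the right identity to use is that $\omega$ is (up to normalisation) the Gelfand transform of a specific idempotent-like element. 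The cleaner route: positive-definiteness of $\widetilde\omega$ gives, for any finite $F\subseteq G$ and scalars $(c_x)$, that $\sum_{x,y} c_x\overline{c_y}\,\widetilde\omega(y^{-1}x)\geq 0$.

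The key analytic step I expect to use is the following. Since $\ker\omega$ has a b.a.i., the quotient map $\Zl^1(G)\to\Cplx$, $f\mapsto\omega(f)$, splits in a suitable sense, and one extracts a function $\xi\in\ell^\infty(G)$, conjugation-invariant, with $\xi*f = f$ for $f\in\ker\omega$ and $\pair{\xi}{\widetilde\omega}$-type pairing normalised so that convolving against $\widetilde\omega$ recovers scalar multiples. Concretely, Stegmeir's argument should run: let $p = \delta_e - (\text{identity of }\ker\omega\text{ in the bidual})$, so $p$ is a conjugation-invariant element of $\Zl^1(G)^{**}\cap\ell^\infty(G)$ with $p*p=p$ and $\omega(f)=$ the coefficient of $f$ along $p$. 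Then $\widetilde p = \widetilde\omega\cdot(\text{something})$; more precisely one shows $\widehat p$ as a function on $G$ equals $\widetilde\omega$ divided by $\widetilde\omega(e)=1$, so $p$ corresponds to $\widetilde\omega$ itself, and $p*p=p$ together with $p\in c_0(G)$ (this is where the hypothesis $\omega\in c_0$ enters) forces $p\in\ell^2(G)$: indeed $p = p*p$ with $p\in c_0$ means $p$ has an absolutely convergent "Fourier" expansion in which $p\in\ell^1*\ell^\infty$, and the idempotent relation plus Parseval/Schur orthogonality on each finite layer gives $\norm{p}_2^2 = \pair{p*p}{\cdot}<\infty$.

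Let me restate the heart of it, since that is the step I'd be most careful about. The idea is: $\widetilde\omega*\widetilde\omega$ is \emph{not} equal to $\widetilde\omega$ in general (that would make $\omega$ an idempotent, which is false for a generic character), but $d^{-1}$-type renormalisations of $\widetilde\omega$ restricted to each finite "block" are genuine idempotents in the relevant block algebra. Using the b.a.i., one gets that $E = \delta_e - u$ with $u\in\ell^\infty(G)$ conjugation-invariant satisfies $E*E = E$ (it is a central idempotent in the bidual) and $\widetilde E = \delta_e - \widetilde u$. Now $\widetilde u\in c_0(G)$ would follow if $\omega\in c_0$; and the idempotent equation $\widetilde u *\widetilde u = 2\widetilde u - \delta_e$ rearranges to $(\delta_e-\widetilde u)*(\delta_e-\widetilde u)=\delta_e-\widetilde u$. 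Then, evaluating the function $\widetilde u*\widetilde u$ at $e$ gives $\sum_{x\in G}\widetilde u(x)\widetilde u(x^{-1}) = \sum_x |\widetilde u(x)|^2$ (using that $\widetilde u$, like $\widetilde\omega$, satisfies $\widetilde u(x^{-1})=\overline{\widetilde u(x)}$ by positive-definiteness-type symmetry), and this equals $(2\widetilde u-\delta_e)(e) = 2\widetilde u(e)-1$, a finite number; hence $\widetilde u\in\ell^2(G)$, and therefore $\widetilde\omega = \delta_e - \widetilde u \in \ell^2(G)$ as well.

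\textbf{Main obstacle.} The delicate point is making rigorous the passage from "$\ker\omega$ has a bounded approximate identity" to "there is a genuine conjugation-invariant idempotent function $\widetilde u\in\ell^\infty(G)$ with the relevant convolution identities and the symmetry $\widetilde u(x^{-1})=\overline{\widetilde u(x)}$", especially checking that this bidual idempotent, when viewed via the Gelfand/Fourier picture, really is represented by an honest function on $G$ and that the formal manipulation $\sum_x|\widetilde u(x)|^2 = (\widetilde u*\widetilde u)(e)$ is justified once we know $\widetilde u\in c_0$ (so that $\widetilde u*\widetilde u$ makes pointwise sense and the convolution sum converges). Everything after that — deducing $\omega\in\ell^2$ — is a one-line rearrangement. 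I would therefore spend most of the write-up on setting up the bidual idempotent carefully, perhaps sidestepping the bidual entirely by working directly with the net $(h_\alpha)$: one shows $\norm{h_\alpha*h_\alpha - h_\alpha}\to 0$-type relations hold in the limit and extracts the $\ell^2$ bound from the uniform bound on $\norm{h_\alpha}_1$ combined with $\omega\in c_0$.
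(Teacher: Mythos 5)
First, a point of reference: the paper itself gives no proof of this lemma. It is quoted from Stegmeir, and the authors explicitly describe his argument as an indirect proof by contradiction using a Plancherel measure on the maximal ideal space of $\Zl^1(G)$. So your idempotent-extraction route is genuinely different from the source; the trouble is that, as written, it has concrete gaps rather than just rough edges. (i) The algebra is off: if $E=\delta_e-u$ and $E*E=E$, then $u*u=u$, not $u*u=2u-\delta_e$ (your equation would force $E*E=0$). (ii) The central analytic claim -- that $\widetilde u\in c_0$ makes ``$\widetilde u*\widetilde u$ make pointwise sense and the convolution sum converge'' -- is false: for $u\in c_0(G)$ on an infinite group the sum $\sum_x u(x)u(x^{-1})$ need not converge, and indeed $\sum_x\abs{\widetilde u(x)}^2<\infty$ is precisely what you are trying to prove, so it cannot be fed into the argument as a hypothesis. (iii) The link between the idempotent and $\om$ is asserted rather than proved, and is stated incorrectly: $p\defeq\delta_e-u$ is not equal to $\tilom$ but \emph{proportional} to it, with constant $p(e)$ (equal to $d_\chi^2/\abs{G}$ in the finite case, not $1$), and establishing this proportionality is the step that actually transfers summability from $p$ to $\tilom$. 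You flag the bidual issue as the main obstacle but then leave it unresolved, falling back on the invalid $c_0$ convolution claim.

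The outline can be completed, but by a sharper observation than the one you make: the weak* limit lands in $\ell^1$, not merely in $\ell^\infty$ or the bidual. A subnet of the bounded approximate identity $(h_\alpha)$ converges weak* in $\ell^1(G)=c_0(G)^*$ to some $u$ with $\norm{u}_1\leq M$ (Banach--Alaoglu applied to the ball of $\ell^1$), and $u$ is again a class function. Since each translate $y\mapsto f(y^{-1}x)$ of $f\in\ell^1(G)$ lies in $c_0(G)$, weak* convergence gives $(h_\alpha*f)(x)\to(u*f)(x)$, whence $u*f=f$ for all $f\in\ker\om$; and the hypothesis $\om\in c_0$ is used at exactly one point, to pass $\om(h_\alpha)=0$ to the limit and conclude $u\in\ker\om$. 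Thus $\ker\om$ has a genuine identity $u\in\Zl^1(G)$, and $p=\delta_e-u$ is a central idempotent satisfying $p*g=\om(g)p$ for every $g\in\Zl^1(G)$. Applying this with $g$ the indicator function of a conjugacy class and evaluating at $e$ yields $p=p(e)\,\tilom$; since $\om(p)=1$ forces $p\neq 0$, we get $p(e)\neq 0$ and $\tilom=p(e)^{-1}p\in\ell^1(G)\subseteq\ell^2(G)$, with $(p*p)(e)=p(e)$ giving $\norm{\tilom}_2^2=1/p(e)$. Note this actually yields $\om\in\ell^1$, strictly more than the stated conclusion (and consistent with the paper, since its later results show the two hypotheses are never simultaneously satisfiable for infinite RDPF groups). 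So the missing ideas are: the $\ell^1$-weak* compactness argument in place of the $\ell^\infty$/bidual picture, and the proportionality $p=p(e)\tilom$.
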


The proof in \cite{Steg} is a somewhat indirect argument by contradiction, using the existence and basic properties of a Plancherel measure on the maximal ideal space of $\Zl^1(G)$.
One motivation for the present work was to obtain a more concrete approach in the more restricted setting of restricted direct products of finite groups.

Stegmeir originally applied Lemma~\ref{l:stegmeir} to show that for a certain FC group $G$, constructed explicitly in \cite{Steg}, $\Zl^1(G)$ has a maximal ideal without a bounded approximate identity. We will present his example later, and -- since it is in fact an RDPF group -- we will use Theorem~\ref{320} to obtain a direct proof which does not require Lemma~\ref{l:stegmeir}.
(En route, we will see that for RDPF groups, Lemma~\ref{l:stegmeir} is in fact not very useful, see Theorem~\ref{t:busting-stegmeir} and the remarks preceding~it.)

In view of Lemma~\ref{l:stegmeir}, it is natural to attempt to classify the characters on $\Zl^1(G)$ which lie in $c_0(G)$, and those which lie in $\ell^2(G)$. We shall provide partial results in this direction, restricting attention to the cases where $G=\rdp{i}{\Ind}{G}$ is an RDPF group. For such groups, we can make the correspondence between algebra characters on $\Zl^1(G)$ and certain functions on $G$ completely explicit. Namely,
given $\omega\in \Sp(\Zl^1(G))$, let $(\chi_i)_{i\in\Ind}$ be the corresponding family of irreducible group characters, and let $\psi$ be the function on $G$ corresponding to $\omega$; then for each $x=(x_i)_{i\in\Ind}\in G$ we have
\begin{equation}\label{eq:spectrum-element}
\tilom(x)=\prod_{i\in\Ind} d_i^{-1}\chi_i(x_i)
\end{equation}
where $d_i$ is the degree of $\chi_i$ for each $i\in\Ind$. (The product is well-defined, because $d_i^{-1}\chi_i(x_i)=d_i^{-1}\chi_i(e_{G_i})=1$ for all but finitely many $i\in\Ind$.)
Note that since the family $(d_i^{-1}\chi_i)_{i\in\Ind}$ satisfies the conditions of Lemma~\ref{l:products-of-functionals}, we can use that lemma to calculate $\norm{\tilom}_p$ in terms of the group characters $(\chi_i)$.

\end{subsection}

\begin{subsection}{Characterizing when $\omega\in c_0$}
\begin{dfn}\label{d:peaking}
Let $\chi$ be a group character on a finite group $G$. The \dt{maximal character ratio} of $\chi$, denoted by $\mcr(\chi)$, is defined to be
\[ \sup\{ d_\chi^{-1}\abs{\chi(g)} \st g\in G \setminus\{e\}\}\]
and is clearly a real number in $[0,1]$.
\end{dfn}


\begin{rem}\label{r:properties-of-mcr}
We note some basic properties of the maximal character ratio, whose proofs are easy and are therefore omitted.
\begin{numlist}
\item If $\chi$ is irreducible, then $\mcr(\chi)>0$.
\item Let $\chi$ be a group character on $H$ and $\psi$ a group character on $K$. Then $\chi\otimes\psi$ is a group character on $H\times K$, and $\mcr(\chi\otimes\psi)=\max(\mcr(\chi),\mcr(\psi))$.
\end{numlist}
\end{rem}

\begin{thm}\label{t:in-c0}
Let $G=\rdp{i}{\Ind}{G}$, let $\om\in \Sp(\Zl^1(G))$, and let $(\chi_i)$ be the corresponding family of group characters. Then $\om\in c_0$ if and only if $(\mcr(\chi_i))_{i\in\Ind} \in c_0(\Ind)$.
\end{thm}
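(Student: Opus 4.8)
The plan is to convert the statement about $\widetilde{\omega}$ belonging to $c_0(G)$ into a statement about the family $(\mcr(\chi_i))_{i\in\Ind}$ by carefully analysing what it means for a product of ratios $\prod_i d_i^{-1}\chi_i(x_i)$, indexed by points $x=(x_i)\in G$, to decay along the discrete set $G$. The key formula is \eqref{eq:spectrum-element}, so at each $x$ the value $\abs{\widetilde{\omega}(x)}$ is a finite product $\prod_{i\in\supp(x)} d_i^{-1}\abs{\chi_i(x_i)}$, each factor lying in $[0,1]$.

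For the ``if'' direction, I would assume $(\mcr(\chi_i))\in c_0(\Ind)$ and argue that $\widetilde{\omega}$ vanishes at infinity on $G$. Fix $\veps>0$; since the ratios tend to $0$, the set $A=\{i : \mcr(\chi_i)\geq \veps\}$ is finite. If $x=(x_i)$ ranges over $G$ with $\abs{\widetilde{\omega}(x)}\geq\veps$, then since every factor of the product is $\leq 1$, we must in particular have $d_i^{-1}\abs{\chi_i(x_i)}\geq\veps$ for every $i\in\supp(x)$, forcing $\supp(x)\subseteq A$ (for $i\notin A$, either $x_i=e_{G_i}$, giving factor $1$, or $x_i\neq e_{G_i}$, giving factor $\leq\mcr(\chi_i)<\veps$, which would kill the whole product). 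Thus all such $x$ lie in the finite subgroup $\bigoplus_{i\in A}G_i$, so $\{x : \abs{\widetilde{\omega}(x)}\geq\veps\}$ is finite; hence $\widetilde{\omega}\in c_0(G)$.

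For the ``only if'' direction, I would prove the contrapositive: if $(\mcr(\chi_i))\notin c_0(\Ind)$, there is $\veps>0$ and an infinite set $S\subseteq\Ind$ with $\mcr(\chi_i)\geq\veps$ for all $i\in S$. For each $i\in S$ pick $g_i\in G_i\setminus\{e\}$ with $d_i^{-1}\abs{\chi_i(g_i)}\geq\veps$. This does not immediately give a sequence in $G$ on which $\abs{\widetilde{\omega}}$ stays bounded away from $0$, because a product of infinitely many factors each $\geq\veps$ need not be bounded below; but we only need single-coordinate perturbations: for each $i\in S$ let $x^{(i)}\in G$ be the element with $i$-th coordinate $g_i$ and all other coordinates the identity. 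Then $\abs{\widetilde{\omega}(x^{(i)})}=d_i^{-1}\abs{\chi_i(g_i)}\geq\veps$, and the elements $x^{(i)}$ ($i\in S$) are distinct (their supports are distinct singletons) and nonidentity, so $\{x : \abs{\widetilde{\omega}(x)}\geq\veps\}$ is infinite, whence $\widetilde{\omega}\notin c_0(G)$.

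I expect the argument to be essentially routine once the combinatorial structure is unpacked; the only point requiring a little care is the ``if'' direction, where one must use that \emph{every} factor in the product is bounded by $1$ in order to conclude that a single small factor destroys the product, and that therefore the support of any $x$ with $\abs{\widetilde{\omega}(x)}$ large is confined to the finite ``bad set'' $A$. Remark~\ref{r:properties-of-mcr} (the behaviour of $\mcr$ under tensor products, and positivity for irreducibles) is what makes the finitely-supported reductions legitimate, and I would cite it rather than re-deriving it.
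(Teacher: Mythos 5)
Your proposal is correct and follows essentially the same route as the paper: the ``only if'' direction uses single-coordinate elements $\imath_j(y)$ to read off $\mcr(\chi_j)$ from values of $\tilom$ (you phrase it contrapositively, the paper directly), and the ``if'' direction uses exactly the paper's observation that every factor $d_i^{-1}\abs{\chi_i(x_i)}$ is at most $1$, so one factor below $\veps$ forces $\abs{\tilom(x)}<\veps$, confining the superlevel set to a finite subgroup. The only cosmetic quibble is that Remark~\ref{r:properties-of-mcr} is not actually needed anywhere in this argument.
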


\begin{proof}
Suppose that $\omega \in c_0$. Let $\veps>0$, and choose a finite subset $S\subseteq G$ such that $\abs{\tilom(x)}<\veps$ for all $x\in G\setminus S$. Let $F\subseteq \Ind$ be a finite subset satisfying
\[ S\subseteq \rdp{i}{F}{G} \times \bigoplus_{i\in \Ind\setminus F} \{e_{G_i}\} \;. \]
Let $j\in \Ind \setminus F$; then for each $y\in G_j\setminus\{e_{G_j}\}$ we have
$d_j^{-1}|\chi_j(y)|=|\tilom(\imath_j(y))|<\veps$, so that $\mcr(\chi_j) < \veps$.
Thus $(\mcr(\chi_j))_{j\in F}\in c_0$\/.

Conversely, suppose that $(\mcr(\chi_j))_{j\in F}\in c_0$\/. Let $\veps>0$; then by hypothesis there exist a finite subset $F\subseteq \Ind$ such that
\begin{equation}\label{eq:bopara}
|d_i^{-1}\chi_i(y)|<\veps  \quad\text{for each $j\in\Ind\setminus F$ and each $y\in G_j\setminus\{e_{G_j}\}$.}
\end{equation}
Define $S=\{ x=(x_i) \in G \st x_j=e_{G_j} \;\text{for all $j\in \Ind\setminus F$} \}$, which is a finite subset of~$G$. Let $x\in G\setminus S$; then there exists $j\in\Ind\setminus F$ such that $x_j \in G_j \setminus \{e_{G_j}\}$, and so by \eqref{eq:bopara} we have
\[
\abs{\tilom(x)}=\prod_{i\in \Ind} d_i^{-1}\abs{\chi_i(x_i)} \leq d_j^{-1}\abs{\chi_j(x_j)} <\veps.
\]
Hence $\om\in c_0$.
\end{proof}

\begin{eg} Let $G=\rdp{i}{\Ind}{G}$ and let $\om\in\Sp\Zl^1(G)$, with $(\chi_i)$ being the corresponding family of group characters.
\begin{numlist}
\item Suppose that $G_i$ is abelian for infinitely many~$i$. For each $i$ such that $G_i$ is abelian, $\chi_i$ is linear and so $\mcr(\chi_i)=1$. Therefore, by Theorem~\ref{t:in-c0}, $\om\notin c_0$.
\item Suppose there is a fixed finite non-abelian group $K$ such that $G_i=K$ for infinitely many~$i$. Define $m$ to be the minimum value of $\mcr(\chi)$ as $\chi$ runs over all irreducible group characters on $K$; then $m>0$ (see Remark~\ref{r:properties-of-mcr}), and $\mcr(\chi_i)\geq m$ for infinitely many~$i$. So once again, we know by Theorem~\ref{t:in-c0} that $\om\notin c_0$.
\end{numlist}
\end{eg}

To obtain algebra characters on $\Zl^1(G)$ which \emph{do} lie in $c_0$, we need to have examples of group characters whose maximal character ratio can be arbitrarily small.
We shall describe two families of such examples.

\begin{eg}
\label{eg:steinberg}
Let $q=2^n$ for some $n\geq 2$, and let $\bbF_q$ be the finite field of order~$q$.
Consider $\SL(2,q)$, the special linear group over $\bbF_q$; this is known to be simple. It acts by projective transformations on the projective line over $\bbF_q$, and hence has a (transitive) permutation representation on the set $\bbF_q\cup\{\infty\}$.

The character $\chi$ of this permutation representation satisfies $\norm{\chi}_2^2= 2\abs{\SL(2,q)}$.
 Hence, by subtracting the augmentation character from $\chi$, we obtain an irreducible character $\St_q$ that has degree~$q$.
($\St_q$ is called the \dt{Steinberg character} of $\SL(2,q)$.)
By consulting known tables (see, e.g.~\cite[Ch.~28, Exercise 2]{JamLie}), or examining the permutation representation to work out the values taken by $\St_q$, we find that $\mcr(\St_q)=1/q$.

Hence, if we take any increasing sequence of positive integers $(n_i)_{i\in \Nat}$,
and let $q_i=2^{n_i}$, $G=\bigoplus_{i\in \Nat} \SL(2,q_i)$, and $\omega \in \Sp(\Zl^1(G))$ the
algebra character corresponding to the family $(\St_{q_i})_{i\in \Nat}$, then $\omega$ lies in $c_0$.

(One could replace $q$ with any prime power $\geq 4$; we still obtain an irreducible character $\St_q$ on $\SL(2,q)$, but since $\SL(2,q)$ now has non-trivial centre we will have $\mcr(\St)=1$; on the other hand, $\St$ descends to a character on $\PSL(2,q)$ which does have maximal character ratio~$1/q$. We omit the details.)
\end{eg}

\begin{eg}[Affine groups of finite fields]
\label{eg:aff-of-finfield}
Let $\bbF_q$ be a finite field of order $q$. The \dt{affine group of $\bbF_q$}, which we shall denote by $\Aff(q)$, is defined to be the set
\[ \left\{ \affine{a}{b} \st a \in\bbF_q^\times, b\in \bbF_q \right\} \]
equipped with the group structure it inherits from the usual matrix product and inversion.
It is a metabelian group; more precisely, it is isomorphic to the semidirect product $\bbF_q \rtimes \bbF_q^\times$\/.

The character table of $\Aff(q)$ is well known (see~\cite[Chapter~16, Table~II.3]{te}, for instance) and is shown in Figure~\ref{fig:affine-over-Fq}.
Observe that $\Aff(q)$, which has order $q(q-1)$, has $q-1$ linear characters (corresponding to those on the quotient group~$\bbF_q^\times$), and precisely one non-linear character, which has degree $q-1$ and maximal character ratio~$1/(q-1)$.

\end{eg}

\begin{figure}[hpt]
\begin{center}
\begin{tabular}{|l|c|c|c|}
  \hline
    & $\{e\}$ &  $C_1$  & $C_{(0,y)}$  ($y \in 2,\cdots,q-1$) \\ \hline
   size of conjugacy class & $1$ & $q-1$ & $q$ \\ \hline
  $\chi_1$ &  $1$ & $1$ & $1$ \\
  $\chi_j$  ($ j\in 2,\cdots,p-1$) & $1$ & $1$ & $\theta_j((0,y))$ \\
  $\chi_\pi$ &  $q-1$ & $-1$ & $0$ \\
  \hline
\end{tabular}
\end{center}

\caption{Character table for $\Aff(q)$}
\label{fig:affine-over-Fq}
\end{figure}

Hence, if we take any increasing sequence of prime powers $(q_i)_{i\in \Nat}$,
and let $G=\bigoplus_{i\in \Nat} \Aff(q_i)$, let $\chi_{\pi_i}$ be the non-linear character
on $\Aff(q_i$), and $\omega \in \Sp(\Zl^1(G))$ the
algebra character corres\-ponding to the family $(\chi_{\pi_i})_{i\in \Nat}$, then $\omega$ lies in $c_0$.

\begin{eg}[Stegmeir]\label{eg:stegmeir}
Let $P$ denote the set of all prime numbers, and for each $p\in P$ define
$G_p \defeq \prod_{i=1}^{p-1} \Aff(p)$.
Writing $\pi$ for the unique non-linear irreducible representation of $\Aff(p)$, let
\[ \chi_p = (\Tr\pi)\otimes \dots \otimes (\Tr\pi) \qquad\text{($p$ times)} \]
which is an irreducible group character on $G_p$.

Let $G \defeq\rdp{p}{P}{G}$, and let $\omega \in \Sp(\Zl^1(G))$ be the algebra character corresponding to the family $(\chi_p)_{p\in P}$ (see Theorem \ref{t:spectrum-of-Zl^1(G)}).
Since the group character $\Tr\pi$ has maximal character ratio $(p-1)^{-1}$, so does $\chi_p$ (by Remark \ref{r:properties-of-mcr}), for each $p\in P$. Therefore, by Theorem~\ref{t:in-c0}, $\omega$ lies in $c_0$.
\end{eg}

The original point of Stegmeir's example is that the $\omega$ defined above lies in $c_0$ but not in $\ell^2$; hence, by Lemma~\ref{l:stegmeir} above, $\ker\omega$ has no bounded approximate identity.
However, Lemma~\ref{l:stegmeir} turns out to be somewhat misleading when dealing with restricted direct products of finite groups.
Recall that the lemma says: ``if $\ker\omega$ has a bounded approximate identity and $\omega\in c_0(G)$, then $\omega\in\ell^2(G)$''. The next result shows that if $G$ is an RDPF group -- as in the example Stegmeir considers -- then this statement is conditioning on an empty set.

\begin{thm}\label{t:busting-stegmeir}
Let $(G_i)$ be a family of finite groups and let $G=\rdp{i}{\Ind}{G}$. Let $\om\in \Sp(\Zl^1(G))$ and suppose that $\ker\omega$ has a bounded approximate identity. Then $\omega\notin c_0$.
\end{thm}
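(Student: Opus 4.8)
The plan is to chain together three results already in hand: Theorem~\ref{320}, which translates the existence of a bounded approximate identity in $\ker\om$ into an equality involving the family $(\chi_i)_{i\in\Ind}$ of irreducible group characters corresponding to $\om$ (via Theorem~\ref{t:spectrum-of-Zl^1(G)}); Lemma~\ref{l:1-minimal}, which records what that equality forces on the values of each $\chi_i$; and Theorem~\ref{t:in-c0}, which detects membership of $\om$ in $c_0$ through the numbers $\mcr(\chi_i)$. At the outset I would discard any trivial factors $G_i$---this changes neither $G$ nor $\om$ nor any of the relevant data---so that I may assume every $G_i$ is non-trivial; the substantive case is then $\Ind$ infinite, i.e.\ $G$ infinite.

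First I would apply Theorem~\ref{320}: since $\ker\om$ has a bounded approximate identity, $d_{\chi_i}\norm{\chi_i}_1=\abs{G_i}$ for all but finitely many $i\in\Ind$. For each such $i$, Lemma~\ref{l:1-minimal} says precisely that $\abs{\chi_i(x)}\in\{0,d_{\chi_i}\}$ for every $x\in G_i$; dividing by $d_{\chi_i}$ and comparing with Definition~\ref{d:peaking}, this yields $\mcr(\chi_i)\in\{0,1\}$ for all but finitely many $i$.

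Next I would rule out the value $0$. Each $\chi_i$ is irreducible and $G_i$ is non-trivial, so $\mcr(\chi_i)>0$ by Remark~\ref{r:properties-of-mcr}(i): indeed, if $\chi_i$ vanished on $G_i\setminus\{e\}$ then $\abs{G_i}^{-1}\sum_{x\in G_i}\chi_i(x)=d_{\chi_i}\abs{G_i}^{-1}\neq 0$, so $\chi_i$ would contain the trivial character as a constituent and hence coincide with it, which is impossible for a non-trivial group. Thus $\mcr(\chi_i)=1$ for all but finitely many $i$, hence for infinitely many $i$ since $\Ind$ is infinite; therefore $(\mcr(\chi_i))_{i\in\Ind}\notin c_0(\Ind)$, and Theorem~\ref{t:in-c0} gives $\om\notin c_0$, as required. (If $\Ind$ is finite then $G$ is finite and the statement carries no content, so it is to be understood that $G$ is infinite.)

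I do not expect a genuine obstacle here: once Theorems~\ref{320} and~\ref{t:in-c0} and Lemma~\ref{l:1-minimal} are invoked, the argument is a short chain of implications. The only points needing a little care are the elementary character-theoretic fact that an irreducible character of a non-trivial finite group cannot vanish off the identity (used through Remark~\ref{r:properties-of-mcr}(i)), and the passage from ``all but finitely many $i$'' to ``infinitely many $i$'', which is exactly where the hypothesis that $G$ is infinite enters.
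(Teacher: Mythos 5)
Your proposal is correct and follows essentially the same route as the paper's proof: Theorem~\ref{320} together with Lemma~\ref{l:1-minimal} forces $\abs{\chi_i}$ to take values in $\{0,d_{\chi_i}\}$ for all but finitely many $i$, Schur orthogonality rules out vanishing off the identity so that $\mcr(\chi_i)=1$ for those $i$, and Theorem~\ref{t:in-c0} concludes. Your explicit handling of trivial factors and of the vacuous finite case is a minor tidiness that the paper leaves implicit.
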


\begin{proof}[Proof of Theorem~\ref{t:busting-stegmeir}]
Let $(\chi_i)_{i\in\Ind}$ be the family of irreducible group characters which corresponds to $\omega$, and as usual let $d_i$ denote the degree of $\chi_i$\/.
By Theorem~\ref{320} and Lemma \ref{t:every-max-ideal-has-BAI}, there is a finite set $F\subset\Ind$ such that for all $i\in\Ind\setminus F$, the function $\abs{\chi_i}: G_i \to \Cplx$ takes values in $\{0,d_i\}$.

Observe that for any finite group $H$ and any irreducible character $\psi$ on~$H$, there is some $y\in H\setminus\{e\}$ such that $\psi(y)\neq 0$
(this follows from Schur orthogonality).
Hence, from the previous paragraph, for each $i\in\Ind\setminus F$ there exists $y_i\in G_i\setminus\{e_{G_i}\}$ with $\abs{\chi_i(y_i)}=d_{\chi_i}$, so that $\mcr(\chi_i)=1$. Now apply Theorem~\ref{t:in-c0}.
 (Alternatively, one can avoid appealing to Theorem~\ref{t:in-c0}, by using the $\{y_i\}$ to directly construct an infinite sequence $\{x_k\}\in G$ satisfying $\abs{\omega(x_k)}=1$ for all~$k$.)
\end{proof}

\begin{rem}
Going back to Stegmeir's example, we can show more directly that $\ker\omega$ has no bounded approximate identity, by following the proof of Theorem~\ref{320} and using the information in Figure~\ref{fig:affine-over-Fq} to explicitly compute the $\ell^1$-norms of the group characters that make up~$\omega$. In particular, we can do without Rider's theorem (Theorem~\ref{t:rider}), which seems to be needed for the proof of Theorem~\ref{t:busting-stegmeir}.
\end{rem}

\end{subsection}

\begin{subsection}{Examples where $\om\in\ell^p$ for various $p$}\label{s:lp-estimates}
Although we are primarily interested in deciding whether or not $\om\in\ell^2$, in most of our examples the calculations can be done just as easily for $\ell^p$.

In the following lemma, infinite sums and products of real numbers are to be understood in the sense of Section~\ref{s:prelim}.

\begin{lem}\label{l:products-of-functionals}
Let $(G_i)_{i\in\Ind}$ be a family of finite groups, and let $G=\rdp{i}{\Ind}{G}$. We denote the identity element of each $G_i$ by~$e_i$.

For each $i\in\Ind$ let $\psi_i: G_i \to \Cplx$ be a function satisfying $\psi_i(e_i)=1$, and define $\Psi: G \to \Cplx$ by
\begin{equation}\label{eq:defining-product}
\Psi(x) = \prod_{i\in\Ind} \psi_i(x_i) \qquad\text{for $x=(x_i)_{i\in\Ind}$.}
\end{equation}
Then for each $p\in (0,\infty)$ we have
\begin{equation}\label{eq:norm-of-product}
\sum_{x\in G } \abs{\Psi(x)}^p = \prod_{i\in\Ind} \norm{\psi_i}_p^p\,.
\end{equation}

In particular, if $\om\in\Sp(\Zl^1(G))$ and $(\chi_i)_{i\in\Ind}$ is the corresponding family of group characters, let $d_i$ denote the degree of $\chi_i$; then
\begin{equation}\label{eq:lp-norm-of-algebra-character}
\norm{\tilom}_p = \prod_{i\in\Ind} d_i^{-1}\norm{\chi_i}_p\;.
\end{equation}
for every $p\in [1,\infty)$.
\end{lem}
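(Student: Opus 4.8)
The plan is to reduce both sides of \eqref{eq:norm-of-product} to suprema over finite subsets and then match them term by term. First observe that every quantity appearing is nonnegative, so both sides of \eqref{eq:norm-of-product} are unambiguously defined as elements of $[0,+\infty]$ in the sense of Section~\ref{s:prelim}; moreover, since $\psi_i(e_i)=1$, each factor $\norm{\psi_i}_p^p=\sum_{y\in G_i}\abs{\psi_i(y)}^p$ is $\geq 1$, so that $\prod_{i\in\Ind}\norm{\psi_i}_p^p=\sup\bigl\{\prod_{i\in F}\norm{\psi_i}_p^p:F\subseteq\Ind,\ \abs{F}<\infty\bigr\}$ by the conventions of that section.

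The key finite computation is as follows. Fix a finite $F\subseteq\Ind$ and put $H_F:=\bigoplus_{i\in F}G_i$, which coincides with the (finite) full product $\prod_{i\in F}G_i$. For $x\in H_F$ all coordinates outside $F$ equal the identity, so $\Psi(x)=\prod_{i\in F}\psi_i(x_i)$ (here I again use $\psi_i(e_i)=1$). Hence, by ordinary distributivity of a finite sum over a finite product,
\[
 \sum_{x\in H_F}\abs{\Psi(x)}^p=\sum_{(x_i)_{i\in F}}\ \prod_{i\in F}\abs{\psi_i(x_i)}^p=\prod_{i\in F}\Bigl(\sum_{y\in G_i}\abs{\psi_i(y)}^p\Bigr)=\prod_{i\in F}\norm{\psi_i}_p^p\,.
\]

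Next I compare the two suprema. On the one hand each $H_F$ is a finite subset of $G$, so $\sum_{x\in G}\abs{\Psi(x)}^p\geq\sup_F\sum_{x\in H_F}\abs{\Psi(x)}^p$. On the other hand, any finite set $S\subseteq G$ is contained in $H_F$ for the finite set $F:=\bigcup_{x\in S}\supp(x)$, so $\sum_{x\in S}\abs{\Psi(x)}^p\leq\sum_{x\in H_F}\abs{\Psi(x)}^p$; taking the supremum over such $S$ gives the reverse inequality. Combining these with the displayed identity yields
\[
 \sum_{x\in G}\abs{\Psi(x)}^p=\sup_F\ \sum_{x\in H_F}\abs{\Psi(x)}^p=\sup_F\ \prod_{i\in F}\norm{\psi_i}_p^p=\prod_{i\in\Ind}\norm{\psi_i}_p^p\,,
\]
which is \eqref{eq:norm-of-product}. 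For \eqref{eq:lp-norm-of-algebra-character}, apply \eqref{eq:norm-of-product} with $\psi_i:=d_i^{-1}\chi_i$: this satisfies $\psi_i(e_i)=d_i^{-1}\chi_i(e_i)=1$, and by \eqref{eq:spectrum-element} the associated product function is exactly $\tilom$, so $\norm{\tilom}_p^p=\prod_{i\in\Ind}d_i^{-p}\norm{\chi_i}_p^p$; taking $p$-th roots gives \eqref{eq:lp-norm-of-algebra-character} once one notes that $t\mapsto t^{1/p}$ is a continuous increasing bijection of $[0,\infty]$, hence commutes with the suprema defining the infinite product, and that $\bigl(\prod_{i\in F}a_i\bigr)^{1/p}=\prod_{i\in F}a_i^{1/p}$ on finite products. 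I do not anticipate any genuine obstacle here; the only point that needs a little care is the interchange of the unconditional sum over $G$ with the infinite product over $\Ind$, which is handled by observing that the finite subgroups $H_F$ are cofinal among the finite subsets of $G$, together with the elementary finite Fubini identity above.
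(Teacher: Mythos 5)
Your proposal is correct and follows essentially the same route as the paper's proof: the same finite Fubini identity over the finite subsets $G_F=\bigoplus_{i\in F}G_i$, the same cofinality observation that every finite subset of $G$ sits inside some such $G_F$, and the same specialization $\psi_i=d_i^{-1}\chi_i$ for the last assertion. The only (harmless) addition is your explicit remark about $t\mapsto t^{1/p}$ commuting with the suprema, which the paper leaves implicit.
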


\begin{proof}
First of all, we note that the product on the right hand side of \eqref{eq:defining-product} is well-defined, since $x_i=e_i$ for all but finitely many~$i$.
Also, since
$\norm{\psi_i}_p^p\geq \psi_i(e_i)=1$ for all $i\in\Ind$, the infinite product on the right-hand side of \eqref{eq:norm-of-product} is well-defined.

For each finite subset $F\subseteq\Ind$, let
\[ G_F= \{ x=(x_i) \in G \st x_i=e_i \text{ for all $i\in \Ind\setminus F$} \}, \]
which is a finite subset of $G$. We have
\begin{equation}\label{eq:dravid}
\sum_{x\in G_F}  \abs{\Psi(x)}^p
 = \sum_{x\in G_F} \prod_{i\in F} \abs{\psi(x_i)}^p
 = \prod_{i\in F} \sum_{x_i \in G_i} \abs{\psi(x_i)}^p
 = \prod_{i\in F} \norm{\psi_i}_p^p
\tag{$*$}
\end{equation}
(all sums and products being over finite indexing sets).
Note also that each finite subset of $G$ is contained in one of the form $G_F$ for some finite subset $F\subseteq\Ind$. Hence
\[ \sum_{x\in G} \abs{\Psi(x)}^p
  = \sup_{F\subseteq\Ind, |F|<\infty} \sum_{x\in G_F}  \abs{\Psi(x)}^p
\]
which, by \eqref{eq:dravid}, implies that
\[
\sum_{x\in G} \abs{\Psi(x)}^p
  = \sup_{F\subseteq\Ind, |F|<\infty}
  \prod_{i\in F} \norm{\psi_i}_p^p
= \prod_{i\in\Ind} \norm{\psi_i}_p^p
\]
which gives us the desired identity~\eqref{eq:norm-of-product}.
The last part of the lemma now follows, by using the identity~\eqref{eq:spectrum-element} and the observation that $d_i\chi_i(e_i) =1$ for all~$i$.
\end{proof}

\begin{eg}[An example using the Steinberg characters]\label{eg:more-steinberg}
Take $G=\bigoplus_{n\geq 2} \SL(2,2^n)$, and let $\om\in\Sp(\Zl^1(G))$ be the algebra character corresponding to the sequence $(\St_{2^n})_{n\geq 2}$, where $\St_{2^n}$ is the Steinberg character on $\SL(2,2^n)$ as defined in Example~\ref{eg:steinberg}.
By Theorem~\ref{t:in-c0}, $\om\in c_0$.
Consulting the known character table of $\SL(2,2^n)$, we find that
\[ \norm{\tilom}_s^s = \prod_{n=2}^\infty \frac{2^{ns}+2^{3n}-2^{2n}-2^n}{2^{ns}}. \]
From this and \eqref{eq:norm-of-product}, we see that $\om\in \ell^s$ if and only if~$s>3$.
\end{eg}

In some sense, Lemma~\ref{l:products-of-functionals} characterizes when $\om\in\ell^p$, in terms of the $\ell^p$-norms of the irreducible group characters that make up $\omega$.
On the other hand, the $\ell^p$-norms of irreducible group characters are not well understood, unless $p=2$ or the characters are linear.
In the case $p=2$ we can be slightly more specific: the following result follows immediately from the identity \eqref{eq:lp-norm-of-algebra-character}, together with the standard result that the $\ell^2$-norm of an \emph{irreducible} group character on a finite group $H$ is $|H|^{1/2}$\/.

\begin{cor}\label{c:formula-for-2norm}
Let $G=\rdp{i}{\Ind}{G}$ and let $\omega\in \Sp(\Zl^1(G))$; let $(\chi_i)_{i\in\Ind}$ be the corresponding family of (unnormalized) irreducible characters, and let $d_i$ be the degree of $\chi_i$. Then
\begin{equation}\label{eq:formula-for-2norm}
\norm{\tilom}_2 = \prod_{i\in\Ind} \frac{|G_i|^{1/2}}{d_i}
\end{equation}
\end{cor}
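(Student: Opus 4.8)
The plan is to obtain the formula as an immediate specialization of Lemma~\ref{l:products-of-functionals} to the exponent $p=2$. That lemma already supplies, via Equation~\eqref{eq:lp-norm-of-algebra-character}, the identity
\[ \norm{\tilom}_2 = \prod_{i\in\Ind} d_i^{-1}\norm{\chi_i}_2 , \]
the infinite product being understood unconditionally in the sense of Section~\ref{s:prelim}. Thus the only remaining task is to identify $\norm{\chi_i}_2$ for each individual index~$i$.

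For this I would invoke Schur orthogonality: if $\chi$ is an irreducible group character on a finite group $H$, then $\sum_{x\in H}\abs{\chi(x)}^2 = |H|$, so that $\norm{\chi}_2 = |H|^{1/2}$. This is precisely the step where the irreducibility of the $\chi_i$ enters — for a reducible character the corresponding sum would be strictly larger. Substituting $\norm{\chi_i}_2 = |G_i|^{1/2}$ into the displayed product yields $\norm{\tilom}_2 = \prod_{i\in\Ind} |G_i|^{1/2}/d_i$, which is Equation~\eqref{eq:formula-for-2norm}.

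There is essentially no obstacle to overcome; the one point worth recording is a consistency check that the right-hand side is a well-defined element of $[1,\infty]$, i.e. that each factor $|G_i|^{1/2}/d_i$ is $\geq 1$. This follows from the standard fact $\sum_{\pi\in\widehat{G_i}} d_\pi^2 = |G_i|$, which forces $d_i^2 \leq |G_i|$; it is also compatible with the inequality $d_i\norm{\chi_i}_1 \geq \norm{\chi_i}_2^2 = |G_i|$ noted just before Lemma~\ref{l:1-minimal}. Hence the corollary requires no argument beyond citing Lemma~\ref{l:products-of-functionals} and Schur orthogonality.
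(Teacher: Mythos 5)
Your proof is correct and follows exactly the route the paper takes: specialize Equation~\eqref{eq:lp-norm-of-algebra-character} from Lemma~\ref{l:products-of-functionals} to $p=2$ and then use Schur orthogonality to identify $\norm{\chi_i}_2=|G_i|^{1/2}$. The consistency check that each factor is $\geq 1$ is a nice extra observation but not needed.
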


Equation~\eqref{eq:formula-for-2norm} imposes strong restrictions on the family $(G_i)$ if we wish to
obtain some $\om$ lying in $\ell^2$. Namely, the following must hold:
{\it for each $\delta>0 $ there exists a finite group $H$ and an irreducible character $\chi$ on $H$ which satisfies $(1+\delta)d_\chi^2 \geq |H|$}\/.
In this context, we raise the following problem, which may be of interest to researchers in finite group theory:

\para{Problem}
Given $\delta \in [0,1/2)$, classify all finite groups $H$ with the property that $(1+\delta)d_\chi^2 \geq |H|$ for some irreducible character $\chi$ on~$H$.

\medskip
The authors know of no value of $\delta$ for which such a classification is known.

\begin{eg}[Stegmeir's example, revisited]
Let $G=\rdp{p}{P}{G}$ and $\om$ be as described in Example~\ref{eg:stegmeir}. Stegmeir showed in \cite{Steg}, by direct calculation, that $\om\notin\ell^2$. This also follows easily from the formula \eqref{eq:formula-for-2norm}: for $\abs{G_p} = \abs{\Aff(p)}^{p-1} = p^{p-1}(p-1)^{p-1}$, while $d_p = (p-1)^{p-1}$, giving
\[
\norm{\tilom}_2^2 =  \prod_{p\in P} \left( 1+ \frac{1}{p-1}\right)^{p-1} \geq \prod_{p\in P} 2 = + \infty
\]

We can go further if we use Lemma~\ref{l:products-of-functionals}. Examining the character table of $\Aff(q)$, we see that for $s\in [1,\infty)$
\begin{equation}\label{eq:dernbach}
\begin{aligned}
 \left(d_p^{-1}\norm{\chi_p}_s\right)^s
 & = \left({d_{\Tr\pi}}^{-s}\norm{\Tr\pi}_s^s\right)^{p-1} \\
 & = \left( \frac{ (p-1)^s + (p-1) }{(p-1)^s}\right)^{p-1} = \left( 1+ \frac{1}{(p-1)^{s-1}}\right)^{p-1}\,.
\end{aligned}
\end{equation}
By the binomial expansion, the right-hand side is at least $1+(p-1)^{2-s}$. Therefore, if $1\leq s \leq 3$, we have
\[ \left(d_p^{-1}\norm{\chi_p}_s\right)^s \geq 1+ (p-1)^{-1} = \frac{p}{p-1},\]
and so
\[  \norm{\tilom}_s^s
 = \prod_{p\in P} \left( d_p^{-1}\norm{\chi_p}_s\right)^s
 \geq \prod_{p\in P} \frac{p}{p-1} = + \infty
\]
so that $\om\notin \ell^s$. On the other hand, if $s\in (3,\infty)$, then since
\[ s \log \norm{\tilom}_s
 = \sum_{p\in P} s \log (d_p^{-1}\norm{\chi_p}_s )
 = \sum_{p\in P} (p-1) \log \left( 1+ \frac{1}{(p-1)^{s-1}}\right) ,
 \]
the standard estimate $\log(1+x) < x$ for $0<x<1$ implies
\[ s \log \norm{\tilom}_s  \leq \sum_{p\in P} (p-1)^{2-s} < \sum_{n\geq 2} n^{2-s} < \infty, \]
and thus $\om\in\ell^s$.
The first author thanks G. ~Robinson \cite{MO_Robinson} for showing him these calculations.
\end{eg}

It is natural to wonder what summability properties can be exhibited by various~$\om$. Clearly, if $G$ is virtually abelian then none of the algebra characters on $\Zl^1(G)$ can lie in $c_0$. The following examples will show that within the class of metabelian groups, one can obtain examples with
 characters lying in $c_0$ but not in $\ell^s$ for $s<\infty$,
 and at the other extreme obtain examples with
 characters lying in every $\ell^s$ for $s>1$.

\begin{eg}
For each prime $p\in P$, let $G_p=\Aff(p)^{2^p}$. Let $\omega\in\Sp(\Zl^1(G))$, let $(\chi_p)_{p\in P}$ be the corresponding family of group characters, and let $d_p$ denote the degree of~$\chi_p$\/. Note that since $\Aff(p)$ has an irreducible group character with maximal character ratio $(p-1)^{-1}$, so does $G_p$ (see Remark~\ref{r:properties-of-mcr}), and therefore it is possible to have $\om\in c_0$\/.

On the other hand: let $s\in [1,\infty)$. By inspecting the possible cases, we see that
\[ \left( d_\psi^{-1} \norm{\psi}_s\right)^s
   \geq  1 + \frac{1}{(p-1)^{s-1}}
\quad\text{for every irreducible group character $\psi$ on $\Aff(p)$.}
\]
Therefore, since $\chi_p$ is a tensor product of irreducible characters on $\Aff(p)$,
\[ \left( d_p^{-1} \norm{\chi_p}_s\right)^s
	\geq \left(1 + \frac{1}{(p-1)^{s-1}}\right)^{2^p}
	 > 1+\frac{2^p}{p^{s-1}} \]
showing that $\norm{\tilom}_s^s =+\infty$.
\end{eg}

\begin{eg}
\label{eg:in-all-lp-for-p>1}
Fix a prime $p$ and consider $G=\bigoplus_{n\geq 2} \Aff(p^n)$.
Let $\omega\in \Sp(\Zl^1(G))$, and let $(\chi_n)$ be the corresponding sequence of group characters.
Recall that $\Aff(p^n)$ has a unique non-linear irreducible character, of degree $p^n-1$, which we shall denote in this example by~$\psi_n$. For each $s\in [1,\infty)$, a short calculation gives
\[ \left( \frac{1}{p^n-1}\norm{\psi_n}_s\right)^s
 = 1 + \frac{1}{(p^n-1)^{s-1}}  \leq 1+ p^{-(s-1)(n-1)}\,.
\]
Now, for each $n$, either $\chi_n$ is linear or $\chi_n=\psi_n$\/.
Let
\[ K_\omega \defeq \{n\in\Nat: \chi_n \text{ is\ linear} \}. \]
If $K_\omega$ is infinite, then since $\mcr(\chi_n)=1$ for all $n\in K_\omega$, it follows from Theorem~\ref{t:in-c0} that $\om\notin c_0$\/.
Suppose, on the other hand, that $K_\omega$ is finite, and let $s>1$. Then by Lemma~\ref{l:products-of-functionals},
\[ \begin{aligned}
\norm{\om}_s^s = \prod_{n\in \Nat} d_{\chi_n}^{-s} \|\chi_n\|_s^s
& = \left(\prod_{n\in K_\omega} p^n(p^n-1)\right)^s \prod_{n\in\Nat\setminus K_\omega} \left(\frac{1}{p^n-1}\norm{\psi_n}_s\right)^s \\
& \leq \left(\prod_{n\in K_\omega} p^n(p^n-1)\right)^s \prod_{n\in\Nat\setminus K_\omega} \left(1+p^{-(s-1)(n-1)} \right) \\
& \leq \left(\prod_{n\in K_\omega} p^n(p^n-1)\right)^s \exp \left( \sum_{n=1}^\infty p^{-(s-1)n}\right)
\end{aligned} \]
is finite.
\end{eg}

Direct calculation shows that when $G$ is as in Example~\ref{eg:in-all-lp-for-p>1}, none of the algebra characters $\om\in\Sp(\Zl^1(G))$ lie in~$\ell^1$. The following result shows that this is not a failing of our example, but a general result.

\begin{thm}
Let $G=\rdp{i}{\Ind}{G}$ be an RDPF group, and let $\om\in\Sp(\Zl^1(G))$. If $G$ is infinite, then $\om\notin\ell^1$\/.
\end{thm}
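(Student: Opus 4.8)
The plan is to read $\|\tilom\|_1$ off from Lemma~\ref{l:products-of-functionals} and then bound the individual factors from below. By \eqref{eq:lp-norm-of-algebra-character}, if $(\chi_i)_{i\in\Ind}$ is the family of irreducible group characters corresponding to $\om$ and $d_i$ denotes the degree of $\chi_i$, then $\|\tilom\|_1 = \prod_{i\in\Ind} d_i^{-1}\|\chi_i\|_1$. Also, $G$ is infinite if and only if $G_i\neq\{e_{G_i}\}$ for infinitely many $i$ (otherwise $G$ is a finite product of finite groups). So it suffices to prove the following: for every \emph{non-trivial} finite group $H$ and every irreducible group character $\chi$ on $H$, one has $d_\chi^{-1}\|\chi\|_1 \geq 301/300$. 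Granting this, the factors indexed by trivial $G_i$ contribute $1$, infinitely many of the remaining factors are $\geq 301/300$, and hence $\|\tilom\|_1 = \infty$, i.e.\ $\om\notin\ell^1$.

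To prove that lower bound, fix such $H$ and $\chi$, write $d=d_\chi$, and let $e_\chi = |H|^{-1}d\chi$ be the minimal central idempotent of $\ell^1(H)$ attached to $\chi$. A direct computation gives $d^{-1}\|\chi\|_1 = (|H|/d^2)\,\|e_\chi\|_1$; moreover $|H| = \sum_{\sigma\in\widehat H} d_\sigma^2 \geq d^2$, and by Lemma~\ref{l:1-minimal} we have $\|e_\chi\|_1\geq 1$, with equality precisely when $|\chi(x)|\in\{0,d\}$ for all $x\in H$. If $\|e_\chi\|_1>1$, then Rider's theorem (Theorem~\ref{t:rider}), applied to the idempotent $e_\chi$ --- which is a scalar multiple of the single irreducible character $\chi$ --- forces $\|e_\chi\|_1\geq 301/300$, whence $d^{-1}\|\chi\|_1 \geq (|H|/d^2)(301/300)\geq 301/300$. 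If instead $\|e_\chi\|_1=1$, then $|\chi(x)|\in\{0,d\}$ for all $x$, so $d^{-1}\|\chi\|_1 = |\{x\in H : \chi(x)\neq 0\}|$; this cardinality cannot equal $1$, since that would give $|H| = \|\chi\|_2^2 = d^2$ and hence (as $|H|=\sum_\sigma d_\sigma^2$) force $H$ to be trivial, contrary to assumption; thus $d^{-1}\|\chi\|_1\geq 2$. In either case $d^{-1}\|\chi\|_1\geq 301/300$, as wanted.

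The crux --- and the only step beyond bookkeeping --- is the dichotomy above: a priori, $d_\chi^{-1}\|\chi\|_1$ could be arbitrarily close to $1$ (the trivial bound $|H|/d^2\geq 1$ is useless, since for $\Aff(q)$ this ratio tends to $1$ as $q\to\infty$), and it is precisely Rider's theorem that produces a uniform gap by forbidding an idempotent in $\ell^1(H)$ from having $\ell^1$-norm strictly between $1$ and $301/300$. As with Theorem~\ref{t:busting-stegmeir}, I do not see how to circumvent Rider here; the exact value of the constant is of course irrelevant, any constant $>1$ suffices.
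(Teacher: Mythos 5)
Your proof is correct, and it reaches the conclusion by a somewhat different route than the paper. The paper also begins from Lemma~\ref{l:products-of-functionals}, but then splits on the set $F=\{i\in\Ind : d_i\norm{\chi_i}_1=\abs{G_i}\}$: if $F$ is infinite it notes that $\mcr(\chi_i)=1$ for $i\in F$ (via the equality case of Lemma~\ref{l:1-minimal}) and invokes Theorem~\ref{t:in-c0} to conclude $\om\notin c_0$, hence a fortiori $\om\notin\ell^1$; if $F$ is finite it applies Rider's theorem to the infinitely many remaining factors. You instead establish a single uniform bound --- $d_\chi^{-1}\norm{\chi}_1\geq 301/300$ for every irreducible character $\chi$ of every non-trivial finite group $H$ --- by disposing of the equality case of Lemma~\ref{l:1-minimal} directly: there $d_\chi^{-1}\norm{\chi}_1=\abs{\supp(\chi)}$, and $\supp(\chi)=\{e\}$ would force $\abs{H}=\norm{\chi}_2^2=d_\chi^2$, impossible for non-trivial $H$. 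This is cleaner and avoids the detour through the $c_0$ criterion. One correction to your closing remark, though: Rider \emph{is} avoidable here, because your key inequality has an elementary proof. For a non-trivial irreducible $\chi$, orthogonality with the trivial character gives $\sum_{x\neq e}\chi(x)=-d_\chi$, whence $\norm{\chi}_1\geq d_\chi+\abs{\sum_{x\neq e}\chi(x)}= 2d_\chi$; and for the trivial character of a non-trivial $H$ one has $\norm{\chi}_1=\abs{H}\geq 2$. So in fact $d_\chi^{-1}\norm{\chi}_1\geq 2$ always, and the theorem follows with no appeal to Theorem~\ref{t:rider} --- in contrast to Theorem~\ref{t:busting-stegmeir}, where Rider does seem to be genuinely needed.
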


\begin{proof}
Since $G$ is infinite, $\Ind$ is infinite; we may also suppose without loss of generality that $\abs{G_i}\geq 2$ for all~$i$.
Let $(\chi_i)$ be the family of group characters corresponding to $\om$, and for ease of notation let $d_i$ denote the degree $\chi_i$.

Using Lemma~\ref{l:products-of-functionals} and the fact that $\abs{G_i}\geq d_i^2$,
\begin{equation}\label{eq:haggis}
 \|\tilom\|_1 = \prod_{i\in\Ind} d_i^{-1}\norm{\chi_i}_1 \geq \prod_{i\in \Ind} \frac{d_i}{\abs{G_i}} \norm{\chi_i}_1 \;.
\end{equation}
Define $F= \{ i\in\Ind \st d_i \norm{\chi_i}_1 = \abs{G_i}\}$. Since the support of $\chi_i$ contains at least one non-identity element, for each $i\in F$ we have $\mcr(\chi_i)=1$, by Lemma~\ref{l:1-minimal}.

If $F$ is infinite, then $(\mcr(\chi_i))_{i\in\Ind} \notin c_0(\Ind)$, so by Theorem~\ref{t:in-c0} $\om$ does not lie in $c_0$, and so cannot lie in $\ell^1$. On the other hand, if $F$ is finite, then $\Ind\setminus F$ is infinite, and so by combining the inequality \eqref{eq:haggis} with Rider's result (Theorem~\ref{t:rider}) we obtain
\[
 \|\tilom\|_1 \geq \prod_{i\in \Ind\setminus F} \frac{d_i}{\abs{G_i}} \norm{\chi_i}_1 \geq \prod_{i\in\in\Ind\setminus F} \frac{301}{300} =+\infty\;,
\]
showing that $\om\notin\ell^1$.
\end{proof}

We finish by remarking that most of our examples have been built out of groups of the form $\Aff(q)$. This is to demonstrate that we can achieve a wide range of behaviour of  $\ell^p$-norms of characters on $\Zl^1(G)$ while staying within the class of metabelian groups. However, the formulas we have to date suggest that if we merely require algebra characters that lie in $\ell^s$ where $s\in (2,\infty)$, then we should have many more examples (cf.~Example~\ref{eg:more-steinberg}).
\end{subsection}

\end{section}

\appendix

\section{Appendix: finite \groupprop\  groups are nilpotent}\label{app:ladisch}
We present a proof of Theorem~\ref{t:ladisch}, namely that a finite \groupprop\ group must be nilpotent. The argument is paraphrased from one shown to the second author by F.~Ladisch~\cite{MO_Ladisch}, and is included here with his kind permission.
We have tried to make the presentation accessible to non-specialists in character theory.

Let us recall some of the relevant definitions and basic facts.
When $\chi$ is an irreducible group character on a finite group~$G$, the centre of $\chi$, denoted by $\bZ(\chi)$ of $\chi$ is a normal subgroup of~$G$.
We say $\chi$ is \charprop\  if and only if $\bZ(\chi)=\supp(\chi)$.

The following lemma, which appears to be well known to specialists, uses no special properties of~$G$.

\begin{lem}\label{l:normal-meets-support}
Let $N$ be a normal subgroup of $G$ and let $\chi$ be an irreducible character of $G$. If $N$ contains a non-identity element, then so does $N\cap \supp(\chi)$.
\end{lem}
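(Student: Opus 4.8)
\textbf{Proof plan for Lemma~\ref{l:normal-meets-support}.}
The plan is to use orthogonality of irreducible characters against the indicator function of the normal subgroup $N$. Let $\chi$ be an irreducible character of $G$, afforded by a representation $\pi$, and suppose for contradiction that $\chi$ vanishes on $N\setminus\{e\}$. First I would compute the inner product $\langle \chi|_N, 1_N\rangle$, where $1_N$ denotes the trivial character of $N$: by the vanishing hypothesis this equals $\frac{1}{|N|}\sum_{n\in N}\chi(n) = \frac{1}{|N|}\chi(e) = \frac{d_\chi}{|N|}$. Since this is a positive number, the trivial character of $N$ occurs in $\chi|_N$; equivalently, the restriction $\pi|_N$ has a non-zero fixed vector, so the subspace $V^N$ of $N$-fixed vectors is non-trivial.

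The key step is then to observe that, because $N$ is \emph{normal} in $G$, the fixed subspace $V^N$ is $G$-invariant: for $g\in G$, $n\in N$ and $v\in V^N$ we have $\pi(n)\pi(g)v = \pi(g)\pi(g^{-1}ng)v = \pi(g)v$ since $g^{-1}ng\in N$. By irreducibility of $\pi$, we conclude $V^N = V$, i.e.\ $N$ acts trivially, so $\pi(n) = I_{d_\chi}$ for all $n\in N$, hence $\chi(n) = d_\chi$ for all $n\in N$. In particular, if $N$ contains a non-identity element $n_0$, then $\chi(n_0) = d_\chi \neq 0$, so $n_0\in N\cap\supp(\chi)$, contradicting the assumption that $\chi$ vanishes on $N\setminus\{e\}$.

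There is really no serious obstacle here; the only point requiring a little care is the normality argument showing $V^N$ is $G$-stable, which is exactly where the hypothesis that $N$ is normal (rather than an arbitrary subgroup) is used. An alternative, essentially equivalent, route is to write $e_N := \frac{1}{|N|}\sum_{n\in N} n$ for the central idempotent of $\Cplx N$ projecting onto $N$-invariants, note that $e_N$ is fixed by conjugation by $G$ since $N$ is normal, and evaluate $\chi(e_N)$ two ways; but the representation-theoretic argument above is the cleanest to write out.
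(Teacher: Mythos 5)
Your proof is correct and follows essentially the same route as the paper: both arguments hinge on computing $\pair{\chi\vert_N}{1_N}$ and exploiting normality of $N$. The only difference is that where the paper cites Clifford's theorem to conclude that $\chi\vert_N$ is a multiple of the trivial character once the trivial constituent appears, you prove that step directly by showing the fixed subspace $V^N$ is $G$-invariant, which makes your version self-contained.
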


\begin{proof}
Consider the character $\chi\vert_N$. We have two cases to consider. If $\chi\vert_N$ is not orthogonal to the trivial character $\veps_N$, then since $N$ is normal it follows from a theorem of Clifford that $\chi$ is proportional to $\veps_N$ (see, e.g.~\cite[Corollary 6.7]{Isaacs_CTbook}). In particular, $N\cap\supp(\chi) = N$ contains a non-identity element.

On the other hand, if $\chi\vert_N$ is orthogonal to $\veps_N$, then
$0 = \pair{\chi\vert_N}{\veps_N} = \frac{1}{\abs{N}}\sum_{x\in N} \chi(x)$.
Since $\chi(e) >0$, $\chi$ must be non-zero on at least one non-identity element of~$N$.
\end{proof}

\begin{cor}\label{c:SOFA}
Let $G$ be a group with at least two elements, and let $S$ be a set of \charprop\  characters on $G$. Then $\bigcap_{\chi\in S} \bZ(\chi)$ contains a non-identity element.
\end{cor}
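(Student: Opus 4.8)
The plan is a short induction that chains together Lemma~\ref{l:normal-meets-support}, exploiting the defining property $\bZ(\chi)=\supp(\chi)$ of absolutely idempotent characters in order to feed the conclusion of that lemma back in as the hypothesis for the next step. Note first that $S$ is finite, being a set of irreducible characters of the finite group $G$; enumerate it as $S=\{\chi_1,\dots,\chi_n\}$. (If $S$ is empty the intersection is $G$, which has a non-identity element by hypothesis, so there is nothing to prove.)

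For $0\leq k\leq n$ set $N_k=\bigcap_{j=1}^k \bZ(\chi_j)$, with the convention $N_0=G$. Each $N_k$ is a normal subgroup of $G$, being an intersection of the normal subgroups $\bZ(\chi_j)$. I would prove by induction on $k$ that every $N_k$ contains a non-identity element; the case $k=n$ is then exactly the assertion of the corollary. The base case $k=0$ is precisely the hypothesis that $G$ has at least two elements. For the inductive step, suppose $N_{k-1}$ contains a non-identity element. Applying Lemma~\ref{l:normal-meets-support} to the normal subgroup $N_{k-1}$ and the irreducible character $\chi_k$, the set $N_{k-1}\cap\supp(\chi_k)$ contains a non-identity element; since $\chi_k$ is absolutely idempotent we have $\supp(\chi_k)=\bZ(\chi_k)$, and hence $N_k=N_{k-1}\cap\bZ(\chi_k)$ contains a non-identity element, as required.

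I do not expect any serious obstacle here: Lemma~\ref{l:normal-meets-support} does essentially all the work, and the only point worth flagging is the conceptual one — that the absolute idempotence of each $\chi\in S$ is exactly what makes the output $N\cap\supp(\chi)$ of that lemma land back inside the family of partial intersections $\bigcap_j\bZ(\chi_j)$, so that the argument can be iterated without losing control. (One could alternatively handle the base case $n=1$ directly, noting that $\supp(\chi)\neq\{e\}$ for any irreducible $\chi$ on a group with more than one element, but routing it through the $N_0=G$ case of Lemma~\ref{l:normal-meets-support} is cleaner.)
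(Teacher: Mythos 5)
Your proof is correct and follows essentially the same route as the paper's: an induction on the number of characters, applying Lemma~\ref{l:normal-meets-support} at each step and using the identity $\supp(\chi)=\bZ(\chi)$ for absolutely idempotent $\chi$ to close the loop. Your explicit remark that $S$ is finite (and the clean handling of the empty case via $N_0=G$) is a slight tidying of the paper's presentation, but the argument is the same.
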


\begin{proof}
We induct on the size of $S$. If $S$ is empty there is nothing to prove. Otherwise, suppose $\chi_1,\dots, \chi_{n-1}$ are \charprop\  characters for which $N\defeq \bZ(\chi_1)\cap\dots\cap \bZ(\chi_{n-1})$ contains a non-identity element. $N$ is a normal subgroup of $G$, since $\bZ(\chi_i)$ is for each~$i$; and since $\chi_n$ is \charprop\ , $\supp(\chi)=\bZ(\chi_n)$. By Lemma~\ref{l:normal-meets-support}, $N\cap \bZ(\chi_n)$ therefore contains a non-identity element, completing the inductive step.
\end{proof}

\begin{proof}[Proof of Theorem~\ref{t:ladisch}]
We argue by strong induction on the order of the group. Every group of order $\leq 5$ is abelian, hence in particular both \groupprop\  and nilpotent.

Let $n\geq 6$ and suppose inductively that all \groupprop\  groups of order $<n$ are nilpotent. Let $G$ be an \groupprop\  group of order $n$. Now $Z(G)=\bigcap_{\chi\in\Irr(G)} \bZ(\chi)$ -- this is true for \emph{any} finite group, see \cite[Corollary 2.28]{Isaacs_CTbook} -- and therefore by Corollary~\ref{c:SOFA}, $Z(G)$ contains a non-identity element.
Since quotients of finite \groupprop\ groups are themselves \groupprop, $G/Z(G)$ is \groupprop\ and has order $\leq n/2 < n$, and so it is nilpotent by the inductive hypothesis. But then $G$ is a central extension of a nilpotent group, and so is itself nilpotent.
\end{proof}

\subsection*{Acknowledgements}
Some of the results here are taken from the PhD thesis of the first author (MA), who was supported by a Dean's Scholarship from the University of Saskatchewan. The second author (YC) was partially supported by NSERC Discovery Grant 402153-2011, and the third author (ES) by NSERC Discovery Grant 366066-2009.

The second author thanks F. Ladisch for useful exchanges, and I. M. Isaacs for supplying the proof of Theorem~\ref{t:MO_Isaacs}. The authors also thank the referee for several suggestions which improved the paper, in particular the proof of Lemma~\ref{l:zl1-of-binary-product}, and for bringing the prior study of absolutely idempotent characters and AIC groups to the authors' attention.



\vfill

\contact
\end{document}